 \theoremstyle{plain}
 \newtheorem{thm1}{Theorem}
  \newtheorem{prop1}[thm1]{Proposition}
\newtheorem{thm}{Theorem}[section]
\newtheorem{lemma}[thm]{Lemma}
\newtheorem{prop}[thm]{Proposition}
\newtheorem{cor}[thm]{Corollary}
\newtheorem{conj}[thm]{Conjecture}
\theoremstyle{definition}
\newtheorem{remark}[thm]{Remark}
\newtheorem{example}[thm]{Example}
\newtheorem*{question}{Question}
\numberwithin{equation}{section}
\def\fo{\mathfrak{o}}
\def\la{\lambda}
\def\vd{\vect{d}}
\def\E{X}
\newcommand{\vect}[1]{\boldsymbol{#1}}
\def\fc{\mathfrak{c}}
\def\cA{\mathcal{A}}
\def\cF{\mathcal{F}}
\def\FF{\mathbb{F}}
\def\NN{\mathbb{N}}
\def\RR{\mathbb{R}}
\def\ZZ{\mathbb{Z}}
\DeclareMathOperator\wt{\mathrm{wt}}
\def\la{\lambda}
\def\<{\langle}
\def\>{\rangle}
\renewcommand{\@makefnmark}{\mbox{\textsuperscript{}}}
\def\keywords{\xdef\@thefnmark{}\@footnotetext}
\def\MSC{\xdef\@thefnmark{}\@footnotetext}
\title{Patterns in sets of positive density in trees and affine buildings}
\author{M. Bj\"orklund, A. Fish, J. Parkinson}
\date{\today}
\begin{document}

\maketitle

\begin{abstract}
We prove an analogue for homogeneous trees and certain affine buildings of a result of Bourgain on pinned distances in sets of positive density in Euclidean spaces. Furthermore, we construct an example of a non-homogeneous tree with positive Hausdorff dimension, and a subset with positive density thereof, in which not all sufficiently large (even) distances are realised.
\end{abstract}
\keywords{Keywords: Density, Ramsey theory on trees and buildings} 
\MSC{2010 Mathematics Subject Classification: 05D10, 05C12, 05C42}


\section*{Introduction}

 A celebrated result in geometric Ramsey theory due to Furstenberg, Katznelson and Weiss~\cite{FKW:89} states that if $\E$ is a Lebesgue measurable subset of $\RR^2$ with positive upper density then the set $D(\E)=\{\|x-y\|\mid x,y\in\E\}$ of distances between elements of $\E$ contains all sufficiently large real numbers. In \cite{Bou:86} Bourgain proved a remarkable generalisation of the Furstenberg-Katznelson-Weiss theorem, showing that if the measurable set $\E\subseteq \RR^n$ has positive upper density, and if~$\Sigma$ is the vertex set of an $(n-1)$-simplex in~$\RR^n$, then $\E$ contains an isometric copy of each sufficiently large dilation of~$\Sigma$. 

Discrete analogues of both the Furstenberg-Katznelson-Weiss and Bourgain theorems have recently been obtained by Magyar~\cite{Mag:08,Mag:09}. In \cite{Mag:08} it is shown that if $\E$ is a subset of positive upper density in $\ZZ^n$ with $n>4$, then the set $D^2(\E)=\{\|x-y\|^2\mid x,y\in\E\}$ contains all large multiples of a fixed integer $m$, and in \cite{Mag:09} an analogue of Bourgain's Theorem is proved. Further configurations in sets of positive density in $\ZZ^n$ have recently been studied in the papers~\cite{BF:16,BB:17}.

In this paper we investigate the extent to which certain configurations must necessarily be present in subsets of positive upper density in homogeneous trees and affine buildings. In particular we prove analogues of the Furstenberg-Katznelson-Weiss and Bourgain theorems in this ``$p$-adic'' context. 

Let us first discuss the case of trees. Let $T_{q}$ be the homogeneous tree of degree~$q+1\geq 3$ with vertex set $V$, and let $o\in V$ be a fixed choice of root. For $x\in V$ and $n\geq 0$ let $S_n(x)=\{y\in V\mid d(x,y)=n\}$, where $d(\cdot,\cdot)$ is the graph metric. The \textit{(upper) density} of a subset $\E\subseteq V$ is 
$$
d^*(\E)=\limsup_{n\to\infty}\frac{|\E\cap S_n(o)|}{|S_n(o)|}.
$$
Let $D(\E)=\{d(x,y)\mid x,y\in\E\}$ be the set of distances between elements of~$\E$. We prove the following analogue of the Furstenberg-Katznelson-Weiss theorem.

\begin{prop1}\label{prop:1}
Let $\E\subseteq V$ with $d^*(\E)>0$. There exits a constant $K=K(\E)>0$ such that for all $t\in\NN$ with $t\geq K$ there exist vertices $x,y\in \E$ with $d(x,y)=2t$ and $d(o,x)=d(o,y)$. 
\end{prop1}

Thus, in particular, if $d^*(\E)>0$ then $D(\E)$ contains all sufficiently large even integers. Note that since $T_q$ is bipartite it is obviously possible for $d^*(\E)>0$ and $D(\E)\subseteq 2\ZZ$, and so the condition of even distances in the proposition cannot be removed. Note also that we obtain the ``bonus'' that $x$ and $y$ are equidistant to $o$ in Proposition~\ref{prop:1}. This fact turns out to help with finding more elaborate configurations in sets of positive density in~$T_q$ (see Theorem~\ref{thm:main1} and Corollary~\ref{cor:elaborate}). 

The first main theorem of this paper is the following extension of Proposition~\ref{prop:1}, giving a kind of analogue of Bourgain's Theorem. 

\begin{thm1}\label{thm:main2}
Let $\E\subseteq V$ with $d^*(\E)>0$. For each $k>0$ there exists $K=K(\E,k)>0$ such that whenever $t_1,\ldots,t_k\in\NN$ with $t_k\geq \cdots\geq t_2\geq t_1\geq K$ there exists a subset $\{v_0,v_1,\ldots,v_k\}\subseteq \E$ with $d(v_0,v_j)=2t_j$ for all $1\leq j\leq k$ and $d(o,v_0)=d(o,v_1)=\cdots=d(o,v_k)$. 
\end{thm1}

Our second main theorem is an extension of Theorem~\ref{thm:main2} to sets of positive density in certain affine buildings (see Theorem~\ref{thm:main1buildings} for a precise statement). These combinatorial/geometric objects play a role for $p$-adic Lie groups analogous to the role that the symmetric space plays for a real Lie group. Homogeneous trees $T_q$ are the simplest types of affine buildings, being associated to the rank~$1$ group $SL_2(\mathbb{F})$ with $\mathbb{F}$ a local field with residue field $\FF_q$ (with $q$ a prime power).

Our proof techniques, for both the case of trees and buildings, are purely combinatorial, differing significantly from the ergodic theory techniques of \cite{FKW:89} and the harmonic analysis techniques of \cite{Bou:86,Mag:08,Mag:09}. This is not a matter of taste, but rather due to the very different types of symmetries of the patterns sought after in the two settings. More specifically, the type of patterns investigated in  \cite{Bou:86,FKW:89,Mag:08,Mag:09} are invariant under the action of the isometry group of the Euclidean space $\RR^n$, which is an \emph{amenable} group. The arguments in~\cite{FKW:89} make direct use of this fact, while the arguments~\cite{Bou:86,Mag:08,Mag:09} use it in an indirect way, to ultimately reduce the proofs to establishing vanishing of Bessel functions at infinity, which is classical.

On the other hand, the patterns investigated in this paper are invariant under the isometry group of $T_q$ (or a higher rank affine building), which is typically \emph{not} an amenable group. In particular, the approach of~\cite{FKW:89} is not applicable here, since it is not clear why the dynamical system (as described in~\cite{FKW:89}) attached to the set of positive density in the tree would admit a probability measure invariant under the action of the isometry group of $T_q$. If it did, then similar arguments to those in~\cite{FKW:89} could still be made, ultimately reducing the problem to establishing vanishing at infinity of the (positive definite) spherical functions of the tree, which is classical. However, this is a rather degenerate situation, and it is not hard to construct examples of sets with positive density in $T_q$ whose associated dynamical systems for the isometry group of $T_q$ do \emph{not} admit invariant probability measures. Fortunately, our combinatorial approach bypasses this problem entirely.

We conclude this introduction with an outline of the structure of the paper. In Section~\ref{sec:trees} we prove Proposition~\ref{prop:1} and Theorem~\ref{thm:main2}. In Section~\ref{sec:trees2} we address a related question asked to us by Itai Benjamini. In particular, we show that if $T\subseteq T_q$ is tree of positive Hausdorff dimension, and if $\E\subseteq T$ is a subset of positive lower density (hence also positive upper density) then the analogue of Proposition~\ref{prop:1} may fail. In Section~\ref{sec:buildings} we prove our extension of Theorem~\ref{thm:main2} for certain affine buildings (see Theorem~\ref{thm:main1buildings}), and we also translate our results to give a corollary on sets of positive density in $p$-adic Lie groups (see Corollary~\ref{cor:padic}). 

We note that Theorem~\ref{thm:main1buildings} (on affine buildings) covers Theorem~\ref{thm:main2} as the rank~$1$ case, and Theorem~\ref{thm:main2} in turn covers Proposition~\ref{prop:1} as a special case. Nonetheless we will provide complete proofs of both Proposition~\ref{prop:1} and Theorem~\ref{thm:main2} in this paper. We believe that this redundancy is well justified, as the tree case, and in particular Proposition~\ref{prop:1}, more clearly illustrates the key combinatorial ideas driving the proof of Theorem~\ref{thm:main1buildings}, yet avoids the technical complications encountered in the general case. Moreover, our decision to give a complete exposition of the tree case first makes our results more accessible to readers unacquainted with the theory of affine buildings.

%
%
%

\section{Sets of positive density in homogeneous trees}\label{sec:trees}

Let $T_{q}$ be the homogeneous tree with vertex set $V$ and degree~$q+1\geq 3$, and let $o\in V$ be a fixed choice of root. For $x\in V$ and $n\geq 0$ let $S_n(x)=\{y\in V\mid d(x,y)=n\}$, where $d(\cdot,\cdot)$ is the graph metric. We write $S_n=S_n(o)$. 
The \textit{(upper) density} of a subset $\E\subseteq V$, with respect to $x\in V$, is 
$$
d^*(\E,x)=\limsup_{n\to\infty}\frac{|\E\cap S_n(x)|}{|S_n(x)|}.
$$
Let $d^*(\E)=d^*(\E,o)$. A subset $\E$ has \textit{positive density} if $d^*(\E)>0$. While the numerical value of density depends on the choice of root, the property of positive density is independent of this choice, as shown by the following lemma.

\begin{lemma}
If $d^*(\E,x)> 0$ for some $x\in V$ then $d^*(\E,x)> 0$ for all $x\in V$. 
\end{lemma}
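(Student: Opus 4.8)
The plan is to reduce to the case of two adjacent vertices and then propagate along a geodesic. Since the graph $T_q$ is connected, given arbitrary $x,x'\in V$ we may join them by the unique path $x=x_0,x_1,\dots,x_k=x'$, so it suffices to prove the statement when $x$ and $x'$ are neighbours and then induct on $k=d(x,x')$. In fact I would establish the quantitative inequality $d^*(\E,x')\le(q+q^{-1})\,d^*(\E,x)$ for adjacent $x,x'$, which immediately gives $d^*(\E,x')\ge(q+q^{-1})^{-k}\,d^*(\E,x)$ in general, and in particular preserves positivity of the density.

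So fix an edge $\{x,x'\}$. The one geometric input is that $T_q$ is bipartite with $d(x,x')=1$, so for every vertex $y$ the integers $d(x,y)$ and $d(x',y)$ have opposite parity and differ by at most $1$; hence they differ by exactly $1$, and therefore $S_n(x')\subseteq S_{n-1}(x)\cup S_{n+1}(x)$ for every $n\ge1$. Using this inclusion together with the fact that in the homogeneous tree $|S_m(z)|=(q+1)q^{m-1}$ for all $m\ge1$ and all roots $z$ (so sphere sizes do not see the choice of root), one gets for $n\ge2$
\[
\frac{|\E\cap S_n(x')|}{|S_n(x')|}\le\frac{|\E\cap S_{n-1}(x)|}{|S_n(x')|}+\frac{|\E\cap S_{n+1}(x)|}{|S_n(x')|}=\frac1q\cdot\frac{|\E\cap S_{n-1}(x)|}{|S_{n-1}(x)|}+q\cdot\frac{|\E\cap S_{n+1}(x)|}{|S_{n+1}(x)|}.
\]
Letting $n\to\infty$ and using that the $\limsup$ of a sum is at most the sum of the $\limsup$s, and that a finite index shift does not change the $\limsup$, yields $d^*(\E,x')\le(q+q^{-1})\,d^*(\E,x)$; interchanging the roles of $x$ and $x'$ gives the reverse bound, and the lemma follows.

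There is no real obstacle in this argument; the only points requiring a little care are the index bookkeeping in the displayed inequality, the verification that $|S_m(z)|$ is genuinely independent of the root (which is where homogeneity of $T_q$ enters, and which makes the two sphere-size ratios the clean constants $q^{-1}$ and $q$), and the observation that passing to $\limsup$ only requires subadditivity rather than existence of a limit — adequate here since we only need to detect positivity of the density, not compute its exact value.
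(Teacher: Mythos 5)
Your proof is correct and follows essentially the same route as the paper: reduce to adjacent vertices, use the inclusion $S_n(x')\subseteq S_{n-1}(x)\cup S_{n+1}(x)$ together with $|S_m(z)|=(q+1)q^{m-1}$, and pass to $\limsup$ via subadditivity. The only cosmetic difference is that you keep the ratio factors $q^{-1}$ and $q$ precise and get the constant $q+q^{-1}$, whereas the paper loosens this to $2(q+q^{-1})$; both are adequate since only positivity is at stake.
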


\begin{proof}
It suffices to show that $d^*(\E,y)>0$ whenever $d(x,y)=1$. For $n\geq 1$ we have $S_n(x)\subseteq S_{n-1}(y)\cup S_{n+1}(y)$ and hence 
$$
\frac{|\E\cap S_n(x)|}{|S_n(x)|}\leq \frac{|\E\cap S_{n-1}(y)|}{|S_n(y)|}+\frac{|\E\cap S_{n+1}(y)|}{|S_n(y)|}\leq (q+q^{-1})\left(\frac{|\E\cap S_{n-1}(y)|}{|S_{n-1}(y)|}+\frac{|\E\cap S_{n+1}(y)|}{|S_{n+1}(y)|}\right)
$$
Thus
$
d^*(\E,x)\leq 2(q+q^{-1})d^*(\E,y).
$ Thus for arbitrary $y\in V$ we have $d^*(\E,y)\geq C^{-d(x,y)}d^*(\E,x)$, where $C=2(q+q^{-1})$, and hence the result.
\end{proof}

\begin{remark} There exist subsets $\E\subseteq V$ of positive density with $\inf\{d^*(\E,x)\mid x\in V\}=0$ and $\sup\{d^*(\E,x)\mid x\in V\}=1$. For example, if $\E$ consists of one entire ``branch'' based at $o$ then $d^*(\E)=(q+1)^{-1}>0$, and choosing sequences $(x_n)_{\geq 0}$ and $(y_n)_{n\geq 0}$ of vertices with $x_n\to\infty$, $y_n\to\infty$, $x_n\in \E$, and $y_n\notin \E$, we have $d^*(\E,x_n)\to 1$ and $d^*(\E,y_n)\to 0$. This example illustrates that the constant $K$ appearing in Theorem~\ref{thm:main2} must depend on the set $\E$, rather than depending only on $d^*(\E)$.
\end{remark}

\subsection{Proof of Proposition~\ref{prop:1}}

In this section we give a proof of Proposition~\ref{prop:1}, illustrating the proof techniques required for Theorem~\ref{thm:main2} in a simplified setting. If $v\in V$ and $k\in\mathbb{N}$ we write $C(v,k)$ for the ``$k$-children'' (or $k$-descendants) of $v$. That is,
$$
C(v,k)=\{x\in V\mid d(v,x)=k\text{ and }d(o,x)=d(o,v)+k\}.
$$
Let $C(v)$ denote the set of all decendants of $v$. That is,
$$
C(v)=\bigcup_{k\geq 0}C(v,k).
$$
For each $n\geq 0$ and $0\leq t\leq n$ the members of $\cA_{n,t}=\{C(v,t)\mid v\in S_{n-t}\}$ partition $S_n$. Let $\cF_{n,t}$ denote the $\sigma$-algebra generated $\cA_{n,t}$. We call the members of $\cA_{n,t}$ the \textit{atoms} of $\cF_{n,t}$.

\begin{proof}[Proof of Proposition~\ref{prop:1}]
The argument proceeds as follows. 
\smallskip

\noindent\textit{Claim 1:} Suppose that $t_1\in\NN$ is such that there exist no vertices $x,y\in\E$ with $d(o,x)=d(o,y)$ and $d(x,y)=2t_1$. Then for each $v\in S_{n-t}$ (with $n> t\geq t_1$) the proportion of $\cF_{n,t_1-1}$ atoms contained in $C(v,t)$ with the property that they intersect nontrivially with $\E$ is at most~$q^{-1}$.
\smallskip

\noindent\textit{Proof of Claim 1:} Let $v\in S_{n-t}$. We decompose $C(v,t)$ as 
$
C(v,t)=\bigsqcup_{w\in C(v)\cap S_{n-t_1}}C(w,t_1).
$
Let $w\in C(v)\cap S_{n-t_1}$. If there exist distinct $u_1,u_2\in C(w,1)$ such that $C(u_j,t_1-1)\cap \E\neq \emptyset$ for $j=1,2$, then choosing $z_j\in C(u_j,t_1-1)\cap \E$ (for $j=1,2$) we have $d(o,z_1)=n=d(o,z_2)$, and $d(z_1,z_2)=2t_1$, a contradiction (see Figure~\ref{fig:claim1}, where the $\cF_{n,t_1-1}$ atoms contained in $C(v,n)$ are drawn as ellipses, and nontrivial intersection of an atom with $\E$ is denoted by shading). Thus for each $w\in C(v)\cap S_{n-t_1}$ there exists at most $1$ child $u\in C(w,1)$ with the property that $C(u,t_1-1)$ intersects nontrivially with $\E$ (see the elements $w'$ and $u'$ in Figure~\ref{fig:claim1}). Hence the claim.
\begin{figure}[H]
\begin{center}
\begin{tikzpicture}[xscale=0.5, yscale=0.25]
\draw [fill=gray!70!] (-7,4) ellipse (1.5cm and 0.5cm);
\draw (3,4) ellipse (1.5cm and 0.5cm);
\draw [fill=gray!70!] (7,4) ellipse (1.5cm and 0.5cm);
\draw [fill=gray!70!] (-3,4) ellipse (1.5cm and 0.5cm);
\node at (-9.5,10) [left] {$n-t_1+1$};
\node at (-9.5,4) [left] {$n$};
\node at (-9.5,16) [left] {$n-t$};
\node at (0,16) [above] {$v$};
\node at (-9.5,12) [left] {$n-t_1$};
\node at (-5,12) [above] {$w$};
\node at (5,12) [above] {$w'$};
\node at (-3.5,10) [below left] {$u_2$};
\node at (-7.5,10) [below left] {$u_1$};
\node at (7.5,10) [right] {$u'$};
\draw (-5,12)--(0,16)--(5,12);
\draw (-5,12)--(-7,10);
\draw (-5,12)--(-3,10);
\draw (-7,10)--(-8.5,4);
\draw (-7,10)--(-5.5,4);
\draw (-3,10)--(-4.5,4);
\draw (-3,10)--(-1.5,4);
%
\draw (5,12)--(7,10);
\draw (5,12)--(3,10);
\draw (7,10)--(8.5,4);
\draw (7,10)--(5.5,4);
\draw (3,10)--(4.5,4);
\draw (3,10)--(1.5,4);
\draw [style=dotted] (-9,4)--(-8.5,4);
\draw [style=dotted] (-9,12)--(-5,12);
\draw [style=dotted] (-9,10)--(-7,10);
\draw [style=dotted] (-9,16)--(0,16);
\node at (-2.5,4) {$\bullet$};
\node at (-7.5,4) {$\bullet$};
\node at (7.5,4) {$\bullet$};
\node at (7,4) {$\bullet$};
\node at (6.5,4) {$\bullet$};
\node at (8,4) {$\bullet$};
\node at (6,4) {$\bullet$};
\node at (-2.5,2.5) {$z_2\in \E$};
\node at (-7.5,2.5) {$z_1\in \E$};
\phantom{\node at (12,5) {$q$};}
\end{tikzpicture}
\end{center}
\caption{Illustration for Claim 1}
\label{fig:claim1}
\end{figure}
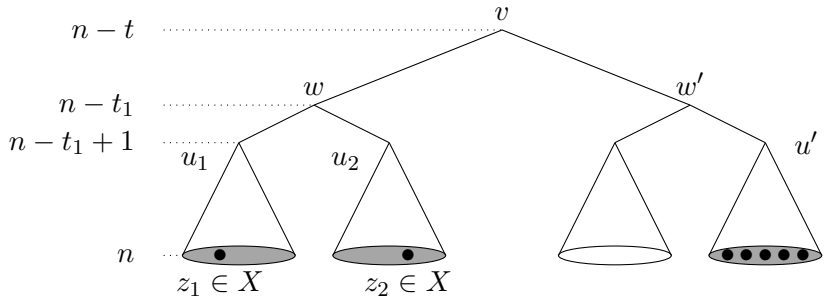

\noindent\textit{Claim 2:} Suppose there are integers $0<t_1<t_2<\cdots<t_k$ such that for each $1\leq j\leq k$ there exist no vertices $x,y\in\E$ with $d(o,x)=d(o,y)$ and $d(x,y)=2t_j$. Then for all $n>t_k$ we have 
$$
\frac{|\E\cap S_n|}{|S_n|}<q^{-k}.
$$
\smallskip

\noindent\textit{Proof of Claim 2:} Let $v_k\in S_{n-t_k}$. By Claim 1 (with $v=v_2$), the proportion of atoms of $\cF_{n,t_k-1}$ contained in $C(v_k,t_k)$ with the property that they intersect nontrivially with $\E$ is at most~$q^{-1}$. However $C(v_k,t_k)$ contains precisely $q$ atoms of $\cF_{n,t_k-1}$, and hence there is at most one $\cF_{n,t_k-1}$ atom contained in $C(v_k,t_k)$ with the property that it intersects nontrivially with $\E$. Suppose that~$A_k$ is such an atom, illustrated as a dashed ellipse in Figure~\ref{fig:claim2} (for the case $k=2$). 
\begin{figure}[H]
\begin{center}
\begin{tikzpicture}[xscale=0.5, yscale=0.25]
\draw [style=dashed] (0,4) ellipse (7.5cm and 2cm);
%
\draw [fill=gray!70!] (-6,4) ellipse (1.5cm and 0.5cm);
\draw (2,4) ellipse (1.5cm and 0.5cm);
\draw [fill=gray!70!] (6,4) ellipse (1.5cm and 0.5cm);
\draw (-2,4) ellipse (1.5cm and 0.5cm);
\node at (-9.5,10) [left] {$n-t_1+1$};
\node at (-9.5,4) [left] {$n$};
\node at (-9.5,18) [left] {$n-t_2$};
\node at (-9.5,16) [left] {$n-t_2+1$};
\node at (0,16) [above] {$v_2'$};
\node at (4,18) [above] {$v_2$};
\node at (-9.5,12) [left] {$n-t_1$};
\draw (-4,12)--(0,16)--(4,12);
\draw (-4,12)--(-6,10);
\draw (-4,12)--(-2,10);
\draw (-6,10)--(-7.5,4);
\draw (-6,10)--(-4.5,4);
\draw (-2,10)--(-3.5,4);
\draw (-2,10)--(-0.5,4);
%
\draw (4,12)--(6,10);
\draw (4,12)--(2,10);
\draw (6,10)--(7.5,4);
\draw (6,10)--(4.5,4);
\draw (2,10)--(3.5,4);
\draw (2,10)--(0.5,4);
\draw [style=dotted] (-9,4)--(-7.5,4);
\draw [style=dotted] (-9,12)--(-4,12);
\draw [style=dotted] (-9,10)--(-6,10);
\draw [style=dotted] (-9,16)--(0,16);
\draw [style=dotted] (-9,18)--(4,18);
\node at (0,3) {$A_2$};
\draw (0,16)--(4,18);
\draw (4,18)--(7,16);
\draw [style=dashed] (7,16)--(5,14);
\draw [style=dashed] (7,16)--(9,14);
\phantom{\node at (13,5) {$q$};}
\end{tikzpicture}
\end{center}
\caption{Illustration for Claim~$2$ (with $k=2$)}
\label{fig:claim2}
\end{figure}
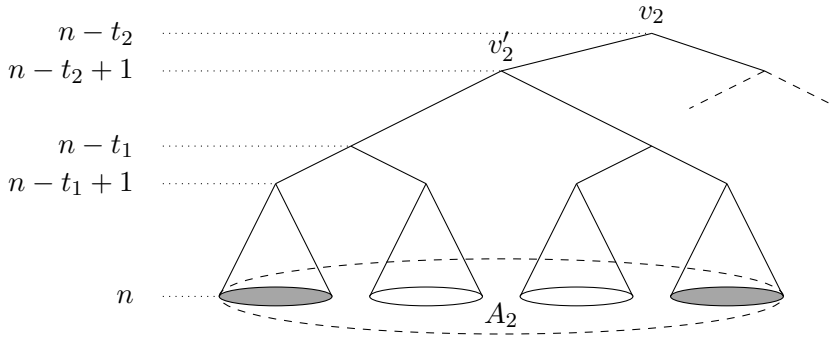
\noindent Let $v_k'$ be such that $A_k=C(v')\cap S_n$, as illustrated. Then, again by Claim 1 (this time with $v=v_2'$), the proportion of the $\cF_{n,t_{k-1}-1}$ atoms contained in $A_k$ with the property that they intersect nontrivially with $\E$ is at most $q^{-1}$ (these atoms are displayed as shaded ellipses in Figure~\ref{fig:claim2}). Thus, overall, the proportion of the $\cF_{n,t_{k-1}-1}$ atoms contained in $C(v_k,t_k)$ with the property that they intersect nontrivially with $\E$ is at most~$q^{-2}$. Iterating this process proves the claim.

\smallskip

The proposition now follows. For if there is an unbounded sequence $t_1<t_2<t_3<\cdots$ such that for each $j\geq 1$ there are no vertices $x,y\in \E$ with $d(o,x)=d(o,y)$ such that $d(x,y)=2t_j$ then for each $k>0$ and each $n>t_k$ we have $|\E\cap S_n|/|S_n|<q^{-k}$. It follows that $d^*(\E)=0$, contradicting positive density. 
\end{proof}

\subsection{Proof of Theorem~\ref{thm:main1}}

The arguments in the previous section are the core to the proof of Theorem~\ref{thm:main2}, however the details become more technical in the general case. We introduce the following notion for the proof. For each $k>0$ let $\mathbb{M}^k$ denote the set of strictly monotone increasing sequences of $k$ positive integers. Let $k>0$. Let $\vect{t}=(t_i)_{i=1}^k\in\mathbb{M}^k$ and let $\vect{r}=(r_i)_{i=1}^k\in\mathbb{N}^k$. Then by a $(k,\vect{t},\vect{r})$-star we mean a set $Y$ of vertices of $T_q$ with a distinguished vertex $v_0\in Y$ (called the \textit{centre} of $Y$) such that
\begin{compactenum}[$(1)$]
\item if $v\in Y$ then $d(v_0,v)\in\{2t_1,\ldots,2t_k\}$,
\item for each $1\leq i\leq k$ we have $|\{v\in Y\mid d(v_0,v)=2t_i\}|=r_i$.
\end{compactenum}
In particular, note that a $(k,\vect{t},\vect{r})$-star has precisely $1+r_1+\cdots+r_k$ vertices. 

We call a $(k,\vect{t},\vect{r})$-star $Y$ \textit{balanced} if $d(o,x)$ is constant for all $x\in Y$ (that is, $Y\subseteq S_n$ for some $n>0$). If $Y$ is balanced, then writing $Y_i=\{x\in Y\mid d(v_0,x)=2t_i\}$ for $i=0,1,\ldots,k$ we have $Y=Y_0\cup Y_1\cup \cdots\cup Y_k$ and $d(x,y)=2t_j$ for all $x\in Y_i$ and $y\in Y_j$ whenever $0\leq i<j\leq k$. This configuration is illustrated below.
\begin{figure}[H]
\begin{center}
\begin{tikzpicture}[xscale=0.8, yscale=0.7]
\draw [fill=gray!70!] ({-2.25+(-1.5+2.25)/2},-4) ellipse (0.375cm and 0.1cm);
\draw [fill=gray!70!] ({-1.25+(0+1.25)/2},-4) ellipse (0.625cm and 0.1cm);
\draw [fill=gray!70!] ({0.25+(1.75-0.25)/2},-4) ellipse (0.75cm and 0.1cm);
\draw [fill=gray!70!] ({0.25+2.5+(4.25-2.5)/2},-4) ellipse (0.875cm and 0.1cm);
\draw (-3,-4)--(1.875,2.5);
\draw (-2.25,-3)--(-2.25,-4);
\draw (-2.25,-3)--(-1.5,-4);
\draw (-1.5,-2)--(-1.25,-4);
\draw (-1.5,-2)--(0,-4);
\draw (-0.75,-1)--(0.25,-4);
\draw (-0.75,-1)--(1.75,-4);
\draw ({0.75},1)--({2.5+0.25},-4);
\draw ({0.75},1)--({4.25+0.25},-4);
\draw [style=dotted] (-3.5,-4)--(-3,-4);
\draw [style=dotted] (-3.5,-3)--(-2.25,-3);
\draw [style=dotted] (-3.5,-2)--(-1.5,-2);
\draw [style=dotted] (-3.5,-1)--(-0.75,-1);
\draw [style=dotted] (-3.5,1)--(0.75,1);
\node at (-3,-4) {$\bullet$};
\node at (-3,-4.2) [below] {$v_0$};
\node at ({-2.25+(-1.5+2.25)/2},-4.2) [below] {$Y_1$};
\node at ({-1.25+(0+1.25)/2},-4.2) [below] {$Y_2$};
\node at ({0.25+(1.75-0.25)/2},-4.2) [below] {$Y_3$};
\node at ({0.25+2.5+(4.25-2.5)/2},-4.2) [below] {$Y_k$};
\node at (1.875,2.5) [right] {$o$};
\node at (-3.5,-4) [left] {$n$};
\node at (-3.5,-3) [left] {$n-t_1$};
\node at (-3.5,-2) [left] {$n-t_2$};
\node at (-3.5,-1) [left] {$n-t_3$};
\node at (-3.5,1) [left] {$n-t_k$};
\draw [style=dotted] (2,-4)--(2.5,-4);
\end{tikzpicture}
\end{center}
\caption{A balanced $(k,\vect{t},\vect{r})$-star.}
\label{fig:configuration}
\end{figure}

Theorem~\ref{thm:main2} follows immediately from the following theorem.

\begin{thm}\label{thm:main1}
Let $\E\subseteq V$ with $d^*(\E)>0$. For each $k>0$ and each $\vect{r}\in \mathbb{N}^k$ there exists a constant $K=K(\E,k,\vect{r})>0$ such that $\E$ contains a balanced $(k,\vect{t},\vect{r})$-star for all sequences $\vect{t}=(t_i)_{i=1}^k\in\mathbb{M}^k$ with $t_1>K$. 
\end{thm}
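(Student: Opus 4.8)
The plan is to generalize the proof of Proposition~\ref{prop:1}, keeping the same ``refine through nested atoms'' mechanism but tracking a whole star of points rather than a single pair. The key quantitative claim I would aim for is a multiplicative density-drop statement: if for a sufficiently long run of integers $t_1 < t_2 < \cdots < t_N$ (all at least some bound) the set $\E$ contains no balanced $(k,\vect{t},\vect{r})$-star with the $t_i$ drawn from that run, then $|\E \cap S_n|/|S_n| < c^{-1}$ for some fixed $c>1$ and all large $n$; iterating over disjoint runs forces $d^*(\E)=0$, a contradiction. So the real content is a single ``one-step'' lemma playing the role of Claim~1.

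Here is the one-step lemma I would try to prove. Fix $\vect{r} \in \NN^k$ and set $R = r_1 + \cdots + r_k$. Claim: there is a constant $m = m(q,k,\vect{r})$ (something like $R$ times a fixed factor depending on $q$) such that the following holds. Suppose $t_1 < t_2 < \cdots < t_m$ are integers and, for every strictly increasing subsequence $(s_1,\ldots,s_k)$ of $(t_1,\ldots,t_m)$, the set $\E$ contains no balanced $(k,\vect{s},\vect{r})$-star. Then for every $v \in S_{n-t_m}$ (with $n$ large) and every atom structure built below $v$, the proportion of $\cF_{n, t_1-1}$-atoms contained in $C(v,t_m)$ that meet $\E$ is at most $q^{-1}$. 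The proof of this lemma is the heart of the matter: one works from the top of $C(v,t_m)$ downward, and at each vertex $w$ on the ``$t_i$-horospheres'' one argues that $\E$ cannot populate too many distinct children-cones, because populating $r_i+1$ of them at level $2t_i$ together with already-chosen points at the earlier levels $2t_{i-1}, \ldots, 2t_1$ would assemble a balanced $(k,\vect{t},\vect{r})$-star centred at $w$. The bookkeeping here — choosing which of the $m$ available distances get used as $s_1,\ldots,s_k$, and counting how many cone-branchings are forced by the nonexistence of stars for every such choice — is where the constant $m$ comes from, and this is the step I expect to be the main obstacle: in Proposition~\ref{prop:1} a single forbidden distance $t_1$ gave a clean factor $q^{-1}$, but here one must absorb the multiplicity $r_i$ at each of $k$ levels simultaneously, and also handle the fact that a balanced star needs its points at the \emph{same} $S_n$, which is exactly what the descendant-cone picture delivers.

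Granting the one-step lemma, the global argument mirrors Claim~2 and the final paragraph of the proof of Proposition~\ref{prop:1}. Suppose for contradiction that there is an unbounded increasing sequence of integers such that $\E$ contains no balanced $(k,\vect{t},\vect{r})$-star with $\vect{t}$ a length-$k$ increasing subsequence of it. Break this sequence into consecutive blocks $B_1, B_2, \ldots$ each of length $m$, with $\min B_{j+1} > \max B_j$, and let $u_j = \max B_j$. Fix $\ell > 0$ and $n > u_\ell$. Starting from $S_{n-u_\ell}$, apply the one-step lemma with the block $B_\ell$: in each cone $C(v, u_\ell)$ for $v \in S_{n-u_\ell}$ there are exactly $q$ atoms of $\cF_{n,(\min B_\ell)-1}$, and at most a $q^{-1}$ proportion of them meet $\E$, hence at most one; pass to that surviving atom, which is a descendant-cone $C(v') \cap S_n$ based further down, and apply the one-step lemma again with the block $B_{\ell-1}$, and so on. After $\ell$ iterations one gets $|\E \cap S_n|/|S_n| < q^{-\ell}$ for all $n > u_\ell$, so $d^*(\E) \le q^{-\ell}$ for every $\ell$, forcing $d^*(\E) = 0$. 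This contradicts $d^*(\E) > 0$, so some length-$k$ increasing subsequence of distances does support a balanced $(k,\vect{t},\vect{r})$-star. Finally, to get the clean form of the theorem (a threshold $K$ above which \emph{every} admissible $\vect{t}$ works, not just some subsequence), one runs the same argument but, given any $\vect{t} = (t_1,\ldots,t_k)$ with $t_1$ large, pads it into a length-$m$ block by inserting extra distances and invoking the lemma so that the \emph{specified} $t_i$'s are the ones realized; equivalently one reorganizes the contrapositive as: if $\E$ contains no balanced $(k,\vect{t},\vect{r})$-star for some fixed $\vect{t}$ with $t_1$ arbitrarily large, the same density collapse occurs. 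The choice of $K = K(\E,k,\vect{r})$ then comes from how large $n$ (equivalently $t_1$) must be for the $\limsup$ defining $d^*(\E)$ to be witnessed above the relevant sphere — exactly as the dependence of $K$ on $\E$ in Proposition~\ref{prop:1}, and consistent with the remark showing $K$ genuinely cannot depend on $d^*(\E)$ alone.
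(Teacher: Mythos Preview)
Your overall architecture (a one-step density-drop lemma plus iteration) matches the paper, but the one-step lemma you formulate has a genuine gap. You propose to track the proportion of $\cF_{n,t_1-1}$-atoms that \emph{meet} $\E$, and to show this proportion is at most $q^{-1}$ provided no balanced $(k,\vect{s},\vect{r})$-star exists for any length-$k$ subsequence $\vect{s}$ of a block of $m$ distances. But the contrapositive of the theorem only hands you, for each $K$, a \emph{single} bad $\vect{t}\in\MM^k$; it does not give you blocks in which every subsequence is bad. If you take $m=k$ so that each block is a single $\vect{t}$, your lemma becomes ``no star for one $\vect{t}$ implies at most a $q^{-1}$ proportion of atoms meet $\E$'', and this is false already for $k=1$: with $\vect{r}=(2)$ and $q\ge 3$, a set $\E$ that hits at most two of the $q$ children-cones below each vertex of $S_{n-t}$ admits no balanced $(1,(t),(2))$-star, yet the proportion of $\cF_{n,t-1}$-atoms meeting $\E$ can be $2/q>q^{-1}$. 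Your ``padding'' manoeuvre at the end does not repair this mismatch, since inserting extra distances would require the nonexistence of stars for subsequences you have no control over. (Separately, your claim that $C(v,u_\ell)$ contains ``exactly $q$'' atoms of $\cF_{n,(\min B_\ell)-1}$ is wrong---there are $q^{u_\ell-\min B_\ell+1}$ of them---though this particular slip is not fatal to the iteration.)

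The paper's fix is to change the quantity being tracked: instead of atoms that merely meet $\E$, one tracks atoms containing at least $r=\max_i r_i$ elements of $\E$. With this threshold a single forbidden $\vect{t}$ \emph{does} force the proportion down to $q^{-1}$ (Lemma~\ref{lem:1}), via a type-$A$/type-$B$ dichotomy along chains $x_k,\ldots,x_1$ with $x_j\in S_{n-t_j}$: if every $x_j$ has two children whose $(t_j-1)$-cones each contain at least $r$ elements of $\E$, one assembles the star with centre $v_0\in S_n$ (not centred at an interior vertex $w$, as your sketch suggests). The price of the ``$\ge r$'' threshold is an additive error $rq^{1-t_{1,1}}$ in the iterated bound (Lemma~\ref{lem:2}), coming from atoms holding between $1$ and $r-1$ points of $\E$; this error is harmless once $t_{1,1}$ is large, and the contrapositive then runs exactly as in your final paragraph.
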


%

%
%
%

\goodbreak

To prove Theorem~\ref{thm:main1} we argue as in Proposition~\ref{prop:1}, using the following lemmas. 

\begin{lemma}\label{lem:1}
Let $\E\subseteq V$, $k>0$, $\vect{r}\in\mathbb{N}^k$, and $\vect{t}\in\mathbb{M}^k$. Let $r=\max\{r_i\mid 1\leq i\leq k\}$. If $\E$ contains no balanced $(k,\vect{t},\vect{r})$-star then for each $v\in S_{n-t}$ with $n\geq t\geq t_k$ the proportion of the atoms of $\cF_{n,t_1-1}$ contained in $C(v,t)$ with the property that they intersect $\E$ in at least $r$ vertices is at most~$q^{-1}$.
\end{lemma}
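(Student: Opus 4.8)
The plan is to establish the contrapositive. So assume $\E$ contains no balanced $(k,\vect{t},\vect{r})$-star, fix $v\in S_{n-t}$ with $n\geq t\geq t_k$, and call an atom of $\cF_{n,t_1-1}$ \emph{$r$-good} if it meets $\E$ in at least $r$ vertices. Call a vertex $p$ with $n-t\leq d(o,p)\leq n-t_1+1$ \emph{heavy} if strictly more than a $q^{-1}$ proportion of the atoms of $\cF_{n,t_1-1}$ contained in $C(p,n-d(o,p))$ are $r$-good; when $d(o,p)=n-t_1+1$ the only such atom is $C(p,t_1-1)$ itself, so in that case ``$p$ heavy'' just means that $C(p,t_1-1)$ is $r$-good. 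In this language the statement to prove is precisely that $v$ is not heavy, so I would assume $v$ \emph{is} heavy and produce a balanced $(k,\vect{t},\vect{r})$-star in $\E$, which is the desired contradiction. The argument is uniform in $k$; for $k=1$ it is essentially the argument of Claim~1 above.

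Everything runs on two elementary averaging facts about a heavy vertex $p$, both proved by exactly the counting used in Claim~1: \emph{(i)} if $d(o,p)<n-t_1+1$ then some vertex of $C(p,1)$ is again heavy --- otherwise every child of $p$ would carry at most a $q^{-1}$ proportion of $r$-good atoms among its own atoms, forcing the proportion below $p$ to be at most $q^{-1}$; and \emph{(ii)} if $d(o,p)\leq n-t_1$ then for at least two vertices $p'\in C(p,1)$ the set $C(p',n-d(o,p'))$ contains an $r$-good atom --- otherwise all $r$-good atoms below $p$ would lie below a single child, which accounts for exactly a $q^{-1}$ fraction of the atoms below $p$. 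Starting from the heavy vertex $v$ and iterating \emph{(i)}, I would grow a descending path $v=p_{n-t},p_{n-t+1},\dots,p_{n-t_1+1}$ of heavy vertices with $p_{\ell+1}\in C(p_\ell,1)$. Set $a_i=p_{n-t_i}$ and $c_i=p_{n-t_i+1}$ for $1\leq i\leq k$; then $c_i$ is the child of $a_i$ lying on the path, $c_1=p_{n-t_1+1}$, and $c_1$ is a descendant of every $c_i$. Since $c_1$ is heavy of level $n-t_1+1$, the atom $C(c_1,t_1-1)$ is $r$-good, and I would take as centre of the star any $v_0\in C(c_1,t_1-1)\cap\E$. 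For each $i$, applying \emph{(ii)} to the heavy vertex $a_i$ and observing that $c_i$ (being heavy) is one of the at least two children of $a_i$ with an $r$-good atom below it, I get a second child $c_i'\neq c_i$ of $a_i$ and an $r$-good atom $C(u_i,t_1-1)$ with $u_i\in C(c_i')\cap S_{n-t_1+1}$; I then fix a set $Y_i$ of $r_i$ vertices in $C(u_i,t_1-1)\cap\E$, which is possible since $r_i\leq r$.

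It then remains to check that $Y=\{v_0\}\cup Y_1\cup\cdots\cup Y_k$ is a balanced $(k,\vect{t},\vect{r})$-star centred at $v_0$, contradicting the hypothesis. All vertices of $Y$ lie in $S_n$, so $Y$ is balanced. For $y\in Y_i$: the centre $v_0$ is a descendant of $c_1$, hence of $c_i$, while $y$ is a descendant of $c_i'\neq c_i$; since $c_i$ and $c_i'$ are distinct children of $a_i$, the confluence of $v_0$ and $y$ is $a_i$, which has $d(o,a_i)=n-t_i$, so $d(v_0,y)=2t_i$. Finally $c_1,u_1,\dots,u_k$ all lie in $S_{n-t_1+1}$, and for $i<j$ the vertex $u_i$ is a descendant of $c_j$ whereas $u_j$ is not, while $c_1$ is a descendant of every $c_j$; hence $c_1,u_1,\dots,u_k$ are pairwise incomparable, the sets $\{v_0\},Y_1,\dots,Y_k$ are pairwise disjoint, and $Y$ has exactly $1+r_1+\cdots+r_k$ vertices, with $r_i$ of them at distance $2t_i$ from $v_0$.

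The one genuinely decisive move is the framing: passing to the contrapositive and realising the star by a greedy ``heavy-child'' descent that peels off a single $r$-good atom at each of the $k$ levels $n-t_1,\dots,n-t_k$ while retaining enough heaviness to keep descending. Once that picture is fixed, facts \emph{(i)} and \emph{(ii)} are one-line applications of the Claim~1 counting and all that remains is the routine tree bookkeeping carried out in the third paragraph. I would also point out explicitly where the threshold $r=\max_i r_i$ is needed and why nothing smaller suffices: a single descent is forced to supply all $k$ groups of the star simultaneously, from atoms that are each only guaranteed to meet $\E$ in $r$ vertices.
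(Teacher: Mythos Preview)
Your proof is correct, and the underlying idea (argue by contrapositive and assemble a star from the assumption of high proportion) is the same as the paper's, but the execution is organised differently and, in one respect, more cleanly.

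The paper introduces a ``type~$A$/type~$B$'' dichotomy only at the $k$ special levels $n-t_1,\ldots,n-t_k$: a vertex $x\in S_{n-s}$ has type~$A$ if at least two of its children $y$ satisfy $|\E\cap C(y,s-1)|\geq r$, and type~$B$ otherwise. The paper then considers sequences $(x_k,\ldots,x_1)$ with $x_j\in S_{n-t_j}$ and $x_{j-1}\in C(x_j)$; if some sequence consists entirely of type~$A$ vertices a star is built directly, and if every sequence contains a type~$B$ vertex the conclusion follows from a ``depth-first scan'' partition of the atoms (left somewhat informal in the paper). Your notion of ``heavy'' at \emph{every} level, together with fact~(i), replaces this partition argument by a single descending path; your fact~(ii) is exactly the contrapositive of the paper's type~$B$ observation, and ``heavy'' implies type~$A$ at the same level. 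The trade-off is that the paper's type~$A$ condition is about raw counts in subtrees (so the star vertices $v_j^1,\ldots,v_j^r$ may be scattered), whereas your construction locates each batch $Y_i$ inside a single $\cF_{n,t_1-1}$ atom --- a stronger conclusion obtained from the stronger hypothesis ``heavy''. Your path-descent packaging makes the assembly of the star and the role of the threshold $r=\max_i r_i$ more transparent, and it sidesteps the need to justify the depth-first partition.
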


\begin{proof}
We introduce the following terminology for the proof. A vertex $x\in S_{n-s}$ (with $0\leq s\leq n$) is said to have ``type $A$'' if there are at least two children  $y_1,y_2\in C(x,1)$ with the property that $\E\cap C(y_j,s-1)$ contains at least $r$ elements for $j=1,2$, and is said to have ``type $B$'' otherwise. Figure~\ref{fig:types} illustrates a type $A$ vertex $x$ (with $r\leq 4$, and where elements of $\E$ are denoted by $\bullet$). 
\begin{figure}[H]
\begin{center}
\begin{tikzpicture}[xscale=0.5, yscale=0.3]
\draw [fill=gray!70!] (7,4) ellipse (1.5cm and 0.5cm);
\draw [fill=gray!70!] (3,4) ellipse (1.5cm and 0.5cm);
\node at (0,9) [left] {$n-s+1$};
\node at (0,4) [left] {$n$};
\node at (0,12) [left] {$n-s$};
\node at (5,12) [above] {$x$};
\node at (3.5,9) [below right] {$y_1$};
\node at (7.5,9) [below right] {$y_2$};
%
\draw (5,12)--(7,9);
\draw (5,12)--(3,9);
\draw (7,9)--(8.5,4);
\draw (7,9)--(5.5,4);
\draw (3,9)--(4.5,4);
\draw (3,9)--(1.5,4);
\draw [style=dotted] (0,4)--(1.5,4);
\draw [style=dotted] (0,12)--(5,12);
\draw [style=dotted] (0,9)--(3,9);
\node at (2.25,4) {$\bullet$};
\node at (2.75,4) {$\bullet$};
\node at (3.25,4) {$\bullet$};
\node at (3.75,4) {$\bullet$};
\node at (6.25,4) {$\bullet$};
\node at (6.75,4) {$\bullet$};
\node at (7.25,4) {$\bullet$};
\node at (7.75,4) {$\bullet$};
\end{tikzpicture}
\end{center}
\caption{The vertex $x$ has type $A$.}
\label{fig:types}
\end{figure}
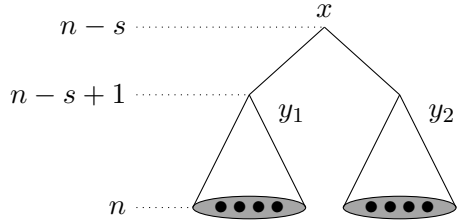

We make the following observation. Let $0<s' < s<n$. If $x\in S_{n-s}$ then 
\begin{align}\label{eq:treedecomposition}
C(x,s)=\bigsqcup_{y\in C(x,1)}C(y,s-1),
\end{align}
and for each $y\in C(x,1)$ the set $C(y,s-1)$ is a union of $q^{s-s'-1}$ atoms of $\cF_{n,s'}$. In particular, each set $C(y,s-1)$, $y\in C(x,1)$, contains the same number of $\cF_{n,s'}$ atoms, and so if $x\in S_{n-s}$ has type $B$ then the proportion of the $\cF_{n,s'}$ atoms of $C(x,s)$ containing at least $r$ elements of $\E$ is at most~$q^{-1}$.

Returning to the argument, let $v\in S_{n-t}$. Let $\mathcal{X}$ be the set of all sequences $(x_k,x_{k-1},\ldots,x_1)$ such that $x_k\in S_{n-t_k}\cap C(v)$ and $x_{k-j}\in C(x_{k-j+1})\cap S_{n-t_{k-j}}$ for $j=1,2,\ldots,k-1$. Suppose that there exists a sequence $(x_k,x_{k-1},\ldots,x_1)\in\mathcal{X}$ such that each $x_j$ has type~$A$. So there exist vertices $y_1,y_1'\in C(x_1,1)$ such that $|\E\cap C(y_1,t_1-1)|\geq r$ and $|\E\cap C(y_1',t_1-1)|\geq r$. Let $v_0\in C(y_1,t_1-1)\cap \E$ and $v_{1}^1,\ldots,v_{1}^r$ be distinct elements of $C(y_1',t_1-1)\cap \E$. For each $j=2,\ldots,k$ let $y_j\in C(x_j,1)$ denote the unique child of $x_j$ on the geodesic segment joining $x_k$ to $x_1$. Since $x_j$ has type~$A$ there is a second vertex $y_j'\neq y_j$ in $C(x_j,1)$ such that $|\E\cap C(y_j',t_j-1)|\geq r$. Let $v_j^1,\ldots,v_j^r$ be distinct elements of $\E\cap C(y_j',t_j-1)$. Then $d(v_0,v_i^j)=2t_i$ for each $1\leq i\leq k$ and $1\leq j\leq r$, and so $\{v_0\}\cup\{v_i^j\mid 1\leq i\leq k,\,1\leq j\leq r\}$ forms a balanced $(k,\vect{t},r\vect{1})$-star, where $\vect{1}\in\mathbb{N}^k$ is the vector with every entry equal to~$1$. In particular $\E$ contains a balanced $(k,\vect{t},\vect{r})$-star, a contradiction.

Thus every $(x_k,x_{k-1},\ldots,x_1)\in\mathcal{X}$ contains at least one vertex of type~$B$. Note that if $x_j$ has type~$B$ then the proportion of $\cF_{n,t_1-1}$ atoms of $C(x_j,t_j)$ intersecting $\E$ in at least $r$ vertices is at most~$q^{-1}$. Thus by a depth-first scan through the natural forest structure on $\mathcal{X}$ (with root nodes $x_k\in S_{n-t_k}\cap C(v)$) we can partition the set of $\cF_{n,t_1-1}$ atoms in $C(v,t)$ in such a way that in each part of the partition the proportion of atoms with the property that they intersect $\E$ in at least $r$ vertices is at most~$q^{-1}$. Thus the proportion of all $\cF_{n,t_1-1}$ atoms of $C(v,t)$ with this property is at most~$q^{-1}$, and hence the result.
\end{proof}

\begin{lemma}\label{lem:2}
Let $\E\subseteq V$, $k>0$, and $\vect{r}\in\mathbb{N}^k$. For $1\leq j\leq \ell$ let $\vect{t}_j=(t_{i,j})_{i=1}^k\in \mathbb{M}^k$, and suppose that $t_{k,j}<t_{1,j+1}$ for each $j=1,\ldots,\ell-1$. If $\E$ contains no balanced $(k,\vect{t}_j,\vect{r})$-stars for each  $1\leq j\leq \ell$ then for all $n>t_{k,\ell}$ we have
$$
\frac{|\E\cap S_n|}{|S_n|}<(q^{-\ell}+rq^{1-t_{1,1}}),
$$
where $r=\max\{r_i\mid 1\leq i\leq k\}$. 
\end{lemma}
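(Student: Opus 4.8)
The plan is to run the iteration of Lemma~\ref{lem:1} through the $\ell$ scales $\vect{t}_\ell,\vect{t}_{\ell-1},\dots,\vect{t}_1$, exactly as Claim~2 does in the proof of Proposition~\ref{prop:1}, but now keeping track of the small ``leakage'' coming from atoms that meet $\E$ in fewer than $r$ vertices (this is the source of the extra $rq^{1-t_{1,1}}$ term). Write $r=\max\{r_i\}$; we may assume $r\geq 1$, since if $r=0$ the hypothesis forces $\E=\emptyset$ and the bound is immediate. I will use three elementary facts about the tree: $|S_m|=(q+1)q^{m-1}$ for $m\geq 1$; the atoms of $\cF_{n,s}$ refine those of $\cF_{n,s'}$ when $0\le s\le s'$, and each atom of $\cF_{n,s'}$ is then the disjoint union of exactly $q^{\,s'-s}$ atoms of $\cF_{n,s}$; and $S_n=C(o,n)$.

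For $1\leq j\leq\ell$ call an atom $A$ of $\cF_{n,t_{1,j}-1}$ \emph{bad} if $|\E\cap A|\geq r$, and let $\cB_j$ be the collection of bad atoms of $\cF_{n,t_{1,j}-1}$. Because $t_{1,1}-1\le t_{1,2}-1\le\dots\le t_{1,\ell}-1$ and $|\E\cap A|$ can only increase when $A$ is enlarged, every member of $\cB_j$ lies inside a member of $\cB_{j+1}$. The core of the proof is the recursion
\[
|\cB_\ell|\ \le\ q^{-1}\,|S_{n-t_{1,\ell}+1}|,\qquad
|\cB_j|\ \le\ q^{\,t_{1,j+1}-t_{1,j}-1}\,|\cB_{j+1}|\quad(1\le j\le \ell-1).
\]
The first inequality is Lemma~\ref{lem:1} applied with $\vect{t}=\vect{t}_\ell$, $v=o$ and $t=n$ (valid since $n>t_{k,\ell}$), together with $S_n=C(o,n)$ and the fact that $C(o,n)$ contains $|S_{n-t_{1,\ell}+1}|$ atoms of $\cF_{n,t_{1,\ell}-1}$. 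For the second, fix $A'=C(v',t_{1,j+1}-1)\in\cB_{j+1}$ and apply Lemma~\ref{lem:1} with $\vect{t}=\vect{t}_j$, $v=v'$ and $t=t_{1,j+1}-1$; this is legitimate because the stacking hypothesis $t_{k,j}<t_{1,j+1}$ gives $t\ge t_{k,j}$, and $\E$ contains no balanced $(k,\vect{t}_j,\vect{r})$-star by assumption. Lemma~\ref{lem:1} then says at most a $q^{-1}$-fraction of the $q^{\,t_{1,j+1}-t_{1,j}}$ atoms of $\cF_{n,t_{1,j}-1}$ inside $A'$ are bad; summing over $A'\in\cB_{j+1}$ --- which catches every bad atom of $\cF_{n,t_{1,j}-1}$, since a bad sub-atom forces its super-atom to be bad --- gives the claim.

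Unwinding, $|\cB_1|\le q^{-\ell}q^{\,t_{1,\ell}-t_{1,1}}|S_{n-t_{1,\ell}+1}|=q^{-\ell}|S_{n-t_{1,1}+1}|$, the last equality from $|S_m|=(q+1)q^{m-1}$. Since each atom of $\cF_{n,t_{1,1}-1}$ has $q^{\,t_{1,1}-1}$ vertices, the union $U$ of the atoms in $\cB_1$ has $|U|=|\cB_1|\,q^{\,t_{1,1}-1}\le q^{-\ell}|S_n|$. Now decompose $\E\cap S_n$ along the atoms of $\cF_{n,t_{1,1}-1}$: the bad atoms together contribute at most $|U|\le q^{-\ell}|S_n|$ elements of $\E$, while each of the at most $|S_{n-t_{1,1}+1}|=(q+1)q^{\,n-t_{1,1}}$ good atoms contributes at most $r-1$ elements, for a further $(r-1)(q+1)q^{\,n-t_{1,1}}<r(q+1)q^{\,n-t_{1,1}}=rq^{\,1-t_{1,1}}|S_n|$. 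Adding the two contributions and dividing by $|S_n|$ gives the stated strict inequality.

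The only place requiring real care --- and the natural place to go wrong --- is checking that the hypotheses of Lemma~\ref{lem:1} are met at each of the $\ell$ applications: the size condition ``$n\ge t\ge t_k$'' must be matched to the current scale, and this reduces exactly to $t_{k,j}<t_{1,j+1}$ (the stacking hypothesis) together with $n>t_{k,\ell}$, while the ``no balanced star'' hypothesis invoked at step $j$ must be the one for $\vect{t}_j$. Everything else is bookkeeping with powers of $q$; the key identity that makes the exponents collapse is the telescoping $\prod_{j=1}^{\ell-1}q^{\,t_{1,j+1}-t_{1,j}}=q^{\,t_{1,\ell}-t_{1,1}}$, which cancels against the sizes $|S_{n-t_{1,\ell}+1}|$ and $q^{\,t_{1,1}-1}$ to leave precisely $q^{-\ell}|S_n|$.
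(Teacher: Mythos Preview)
Your proof is correct and follows essentially the same approach as the paper's: iterate Lemma~\ref{lem:1} through the $\ell$ scales, cutting the proportion of ``bad'' $\cF_{n,t_{1,j}-1}$ atoms by a factor $q^{-1}$ at each step, then bound $|\E\cap S_n|$ by the contribution from the bad atoms plus the $(r-1)$-bounded contribution from the remaining atoms. Your bookkeeping is somewhat more explicit than the paper's (you track $|\cB_j|$ as a count rather than a proportion, and you verify the hypothesis $t\geq t_{k,j}$ at each application of Lemma~\ref{lem:1} via the stacking condition), but the argument is the same.
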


\begin{proof}
Lemma~\ref{lem:1} (applied to the case $v=o$) implies that the proportion of $\cF_{n,t_{1,\ell}-1}$ atoms intersecting $\E$ in at least $r$ vertices is at most $q^{-1}$. Let $X_{\ell}$ be such an atom, and let $x_{\ell}$ be the projection of this atom onto $S_{n-t_{1,\ell}+1}$ (that is, $X_{\ell}=S_n\cap C(x_{\ell})$). Lemma~\ref{lem:1}, this time applied to the case $v=x_{\ell}$, implies that the proportion of the $\cF_{n,t_{1,\ell-1}-1}$ atoms contained in $X_{\ell}$ with the property that they intersect $\E$ in at least $r$ vertices is at most~$q^{-1}$. Hence the proportion of all $\cF_{n,t_{1,\ell-1}-1}$ atoms with the property that they intersect $\E$ in at least $r$ vertices is at most~$q^{-2}$. Iterating this process shows that the proportion of all $\cF_{n,t_{1,1}-1}$ atoms with the property that they intersect $\E$ in at least $r$ vertices is at most~$q^{-\ell}$. Each atom in the remaining $(1-q^{-\ell})$ proportion of atoms contains at most $r-1$ elements of $\E$. Since the total number of $\cF_{n,t_{1,1}-1}$ atoms is $(q+1)q^{n-t_{1,1}}=q^{1-t_{1,1}}|S_n|$ we have
\begin{align*}
|\E\cap S_n|&<q^{-\ell}|S_n|+(1-q^{-\ell})q^{1-t_{1,1}}r|S_n|,
\end{align*}
hence the result.
\end{proof}

%
%

\begin{proof}[Proof of Theorem~\ref{thm:main1}]
Suppose not. Then there exists an integer $k>0$ and a vector $\vect{r}\in\mathbb{N}^k$ such that for all integers $K>0$ there is $\vect{s}=(s_i(K))_{i=1}^k\in\mathbb{M}^k$ with $s_1(K)>K$ such that $\E$ contains no balanced $(k,\vect{s},\vect{r})$-star. Let $\ell>0$ be any integer. Recursively define integers $t_{i,j}$, for $1\leq i\leq k$ and $1\leq j\leq \ell$, by setting $t_{i,1}=s_i(\ell)$ for all $1\leq i\leq k$ and
$$
t_{i,j}=s_i(t_{i,j-1})\quad\text{for $1\leq i\leq k$ and $2\leq j\leq \ell$}.
$$
Let $\vect{t}_j=(t_{i,j})_{i=1}^k$. These sequences satisfy the hypothesis of Lemma~\ref{lem:2}, and since $t_{1,1}=s_1(\ell)>\ell$ we have 
\begin{align*}
\frac{|\E\cap S_n|}{|S_n|}< q^{-\ell}(1+rq)
\end{align*}
for all sufficiently large~$n$, where $r=\max\{r_i\mid 1\leq i\leq k\}$. Thus $d^*(\E)\leq q^{-\ell}(1+rq)$ for each $\ell>0$, contradicting positive density. 
\end{proof}

\subsection{More elaborate configurations}

Theorem~\ref{thm:main1} implies that we can find any star configuration in a dense subset of a tree, provided that all of the distances to the centre of the star are large enough. It is natural to try to go further and add edges to the star configuration (hence making a tree~$T$), and asking if we can again find such configurations in a dense subset of~$T_q$. Indeed we can, provided we are just interested in the distances between adjacent vertices of this tree~$T$. For a precise statement, we need the following definitions.

Let $T$ be a finite rooted tree with vertex set $V(T)=\{0,1,\ldots,N\}$, with $0$ being the root. Write $E(T)$ for the set of undirected edges of $T$. Let $\wt:E(T)\to \ZZ_{>0}$ be a weight function. We consider the weight function as a function on $V(T)\backslash\{0\}$ by setting $\wt(j)=\wt(\{i,j\})$ where $i\in V(T)$ is the unique vertex of $T$ with $d_T(i,j)=1$ and $d_T(0,i)=d_T(0,j)-1$ (that is, $i$ is the penultimate vertex on the geodesic from $0$ to $j$). We say that the weight function $\wt:E(T)\to \ZZ_{>0}$ is \textit{well ordered} if $\wt(j)<\wt(k)$ whenever $d_T(0,j)<d_T(0,k)$, and we say that the weight function is \textit{bounded below} by $K$ if $\min\{\wt(j)\mid j\in\{1,2,\ldots,N\}\}\geq K$

\begin{cor}\label{cor:elaborate}
Let $\E$ be a set of positive density in $T_q$, and let $T$ be a finite rooted tree with vertex set $V(T)=\{0,1,\ldots,N\}$ and root~$0$. There exists $K=K(T,\E)$ such that for each choice of well ordered weight function $\wt:E(T)\to\ZZ_{>0}$ on $T$ bounded below by $K$ there exists a subset $\{v_0,\ldots,v_N\}\subseteq \E$ such that $d(v_i,v_j)=2\wt(j)$ whenever $d_T(0,i)<d_T(0,j)$, and moreover $d(o,v_0)=d(o,v_1)=\cdots=d(o,v_N)$. 
\end{cor}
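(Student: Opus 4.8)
The plan is to deduce Corollary~\ref{cor:elaborate} from Theorem~\ref{thm:main1} by induction on the depth of the tree $T$, building the configuration level by level from the root outward and exploiting the ``balanced'' conclusion of Theorem~\ref{thm:main1} at each stage. The key observation is that a balanced star is exactly the depth-$1$ case, and that each vertex of the star is equidistant to $o$, so each of them is again a ``root'' relative to which $\E$ has positive density (by the first lemma of the section), which lets the construction continue.

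More precisely, I would argue as follows. Order the vertices of $T$ so that $\wt$ (viewed as a function on $V(T)\setminus\{0\}$) is compatible with graph distance from the root; well-orderedness guarantees that the weights strictly increase as we move away from the root, so along any branch of $T$ the edge weights $\wt(j)$ form a strictly increasing sequence, and more is true: by well-orderedness \emph{all} edges at depth $d$ have strictly larger weight than \emph{all} edges at depth $d-1$ (this is the crucial uniformity we will use). Let $h$ be the depth of $T$. For a vertex $i\in V(T)$ at depth $d$ let $c_i$ be its number of children in $T$, and let $\vect{t}^{(i)}\in\mathbb{M}^{h-d}$ be the increasing sequence of distinct weights appearing on the edges of the subtree rooted at $i$, with multiplicity vector $\vect{r}^{(i)}$ recording, for each such weight $w$, the number of vertices $v$ in that subtree with $\wt(v)=w$. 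I would then run an induction ``from the leaves'': first apply Theorem~\ref{thm:main1} with $o$ as root to the data $(\vect{t}^{(0)},\vect{r}^{(0)})$ to obtain a constant $K_0$ and, whenever the weight function is bounded below by $K_0$, a balanced star in $\E$ centred at some $v_0$ with the correct distance-multiset to $v_0$. The trouble is that a balanced star only records distances from $v_0$, not the internal tree structure, so this single application is not enough.

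Hence the cleaner route is a direct induction on $h=\operatorname{depth}(T)$. Base case $h=0$ is trivial, and $h=1$ is Theorem~\ref{thm:main1} verbatim (a depth-$1$ weighted tree with root $0$ and leaves of weights $w_1\le\cdots$ is precisely a balanced $(k,\vect t,\vect r)$-star once we group equal weights). For the inductive step, let $1,\dots,m$ be the children of the root $0$ in $T$, with $\wt(1)\le\cdots\le\wt(m)$, and let $T_1,\dots,T_m$ be the corresponding rooted subtrees, each of depth $<h$. By well-orderedness every edge weight inside every $T_a$ strictly exceeds $\wt(m)$. Apply Theorem~\ref{thm:main1} to $\E$ (rooted at $o$) with the star data given by the multiset $\{\wt(1),\dots,\wt(m)\}$: for $\wt$ bounded below by the resulting constant $K_{\mathrm{star}}$ we get $v_0,v_1,\dots,v_m\in\E$ all equidistant to $o$ with $d(v_0,v_a)=2\wt(a)$. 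Now apply the inductive hypothesis separately to each subtree $T_a$, using $v_a$ as the root vertex and observing that $\E$ still has positive density with respect to $v_a$ (first lemma of Section~\ref{sec:trees}); this yields, for $\wt|_{T_a}$ bounded below by a constant $K_a=K(T_a,\E)$, a copy $\{v_a=u^a_0,u^a_1,\dots\}\subseteq\E$ of $T_a$ with the prescribed (doubled) adjacent-vertex distances and all $u^a_\bullet$ equidistant to $o$ --- say lying in $S_{n_a}$. The remaining point is to glue: one must ensure the distances \emph{between} the different pieces, and between $v_0$ and the deeper vertices of $T_a$, come out right. This is where the main obstacle lies.

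The main obstacle is exactly this gluing/compatibility step, and it is handled by the geometry of trees together with well-orderedness. Since all of $v_0,v_1,\dots,v_m$ sit in a common sphere $S_n$, and since every vertex $u$ of the copy of $T_a$ satisfies $d(v_a,u)=2(\text{some weight of }T_a)<2\wt(a)\le 2\wt(m)$ strictly less than $2\min_b d(v_0,v_b)$, actually less than $d(v_0,v_a)$, the unique geodesic from $v_0$ to $u$ passes through $v_a$ (the ``branch point'' of $v_0,v_a$ is the midpoint of the geodesic $v_0$--$v_a$, which is strictly closer to $v_a$ than $u$ is... one checks $d(v_0,u)=d(v_0,v_a)$ whenever $u$ branches off from $v_a$ \emph{away} from $v_0$, which is automatic because $u$ lies in the sphere through $v_a$ and in $C(v_a)$ relative to... ), so that $d(v_0,u)=d(v_0,v_a)+d(v_a,u)$ or $=d(v_0,v_a)$ depending on orientation; a short case analysis using that the $T_a$-copy is balanced pins this down, and in fact one should simply build the $T_a$-copy inside the descendant set $C(v_a)$, which Theorem~\ref{thm:main1} / its proof allows since the star there is constructed among descendants of the centre. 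Finally, to force all $n_a$ equal (so the whole configuration lies in one sphere $S_N$) and to make the pieces disjoint and branching correctly off distinct children of $v_0$, one notes that Theorem~\ref{thm:main1} actually produces the star in arbitrarily large spheres (its conclusion holds for a set of $t$'s that is cofinal), and the same is true of the inductively constructed subtree copies, so one can first fix a large common $N$, realise each $T_a$-copy inside $C(v_a)\cap S_N$, and use distinctness of the children $v_1,\dots,v_m$ of $v_0$ to guarantee the branches are disjoint and attach at the right places. Taking $K=K(T,\E)$ to be the maximum of $K_{\mathrm{star}}$ and all the $K_a$ then completes the induction.
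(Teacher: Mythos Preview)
Your proposal contains a genuine gap, and ironically it is in the sentence where you abandon the correct approach. You write: ``The trouble is that a balanced star only records distances from $v_0$, not the internal tree structure, so this single application is not enough.'' This is exactly wrong. The point --- stated in the paper immediately before the corollary --- is that in a \emph{balanced} $(k,\vect{t},\vect{r})$-star $Y\subseteq S_n$ with centre $v_0$ and parts $Y_i=\{v\in Y:d(v_0,v)=2t_i\}$, one automatically has $d(x,y)=2t_j$ for every $x\in Y_i$, $y\in Y_j$ with $i<j$. (Reason: the common ancestor of $v_0$ and $x$ sits at level $n-t_i$, that of $v_0$ and $y$ at level $n-t_j<n-t_i$; hence $x$ lies in the same subtree below level $n-t_j$ as $v_0$, while $y$ does not, giving $d(x,y)=2t_j$.) Well-orderedness says precisely that $d_T(0,i)<d_T(0,j)\Rightarrow\wt(i)<\wt(j)$. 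So a single application of Theorem~\ref{thm:main1}, with $\vect t$ the sorted distinct weights and $\vect r$ their multiplicities, already yields the full conclusion: assign each $v_j$ to any star vertex at distance $2\wt(j)$ from $v_0$, and the required identity $d(v_i,v_j)=2\wt(j)$ follows. That is the paper's entire proof.

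Your inductive route, by contrast, has a real circularity problem. When you ``apply the inductive hypothesis separately to each subtree $T_a$, using $v_a$ as the root vertex,'' the resulting constant $K_a$ depends on the new root $v_a$; but $v_a$ was produced \emph{using} the first-level weights, so $K_a$ depends on the very weight function you are trying to bound from below. You therefore cannot set $K=\max(K_{\mathrm{star}},K_1,\dots,K_m)$ in advance of choosing $\wt$. If instead you keep the root at $o$, the inductive hypothesis hands you a balanced copy of $T_a$ somewhere in $\E$, with no mechanism to force its centre to coincide with $v_a$. Your suggested fix --- building the $T_a$-copy inside $C(v_a)$ --- also fails outright, since $C(v_a)\cap S_n=\{v_a\}$, so no nontrivial balanced configuration can live there. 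The gluing paragraph never resolves any of this; it trails off into a case analysis (``$d(v_0,u)=d(v_0,v_a)+d(v_a,u)$ or $=d(v_0,v_a)$ depending on orientation'') that is not carried out and whose second alternative is in fact what you need but cannot arrange.
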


\begin{proof}
Let $\vect{t}$ be the vector of all distinct edge weights of $T$, arranged in increasing order. Let $\vect{r}$ denote the vector of multiplicities of the edge weights (that is, if the edge weight $t$ appears $r$ times in $T$ then the entry of $\vect{r}$ corresponding to the entry $t$ of $\vect{t}$ is~$r$). It follows from Theorem~\ref{thm:main1} there exists $K=K(T,\E)$ such that whenever $\wt:E(T)\to\ZZ_{>0}$ is bounded below by $K$ there exists a balanced $(k,\vect{t},\vect{r})$-star.  The result follows (see Figure~\ref{fig:configuration}).
\end{proof}

\begin{example} The following example illustrates Corollary~\ref{cor:elaborate}.

\noindent\begin{minipage}{0.28\textwidth}
\begin{figure}[H]
\begin{center}
\begin{tikzpicture}[xscale=1, yscale=1]
\draw (0,0)--(-1,-1);
\draw (-1,-1)--(-1.5,-2);
\draw (-1,-1)--(-0.5,-2);
\draw (0,0)--(1,-1);
\node at (0,0) [above] {\small{$0$}};
\node at (-1,-1) [left] {\small{$1$}};
\node at (1,-1) [right] {\small{$2$}};
\node at (-1.5,-2) [left] {\small{$3$}};
\node at (-0.5,-2) [right] {\small{$4$}};
\node at (-0.5,-0.5) [above left] {\small{$t_1$}};
\node at (0.5,-0.5) [above right] {\small{$t_2$}};
\node at (-1.75,-1.5) {\small{$t_3$}};
\node at (-0.25,-1.5) {\small{$t_4$}};
\end{tikzpicture}
\end{center}
\label{fig:elaborate}
\end{figure}
\end{minipage}
\begin{minipage}{0.72\textwidth}
The weighting is well ordered if $\mathrm{max}\{t_1,t_2\}<\mathrm{min}\{t_3,t_4\}$. Let $\E$ be a set of positive density in $T_{q+1}$. By Corollary~\ref{cor:elaborate}, once $t_1,t_2,t_3,t_4$ are sufficiently large, one can find $\{v_0,v_1,v_2,v_3,v_4\}\subseteq \E$ such that
\begin{compactenum}[$(1)$]
\item $d(o,v_0)=d(o,v_1)=d(o,v_2)=d(o,v_3)=d(o,v_4)$;
\item $d(v_0,v_1)=2t_1$, $d(v_0,v_2)=2t_2$, $d(v_1,v_3)=2t_3$, $d(v_1,v_4)=2t_4$, $d(v_0,v_3)=2t_3$, and $d(v_0,v_4)=2t_4$.
\end{compactenum}
The distances $d(v_1,v_2)$, $d(v_2,v_3)$, $d(v_2,v_4)$ and $d(v_3,v_4)$ are unknown.
\end{minipage}
\end{example}

\section{Sets of positive density in trees of bounded degree}\label{sec:trees2}

In this section let $T_q^o$ denote the rooted tree with root $o$ such that every vertex has precisely $q$ children. Thus $T_q^o$ is the homogeneous tree $T_q$ with one branch pruned off at the root~$o$. Let $T \subseteq T_q^o$ be a subtree containing $o$, and with no leaves.  We will assume that the tree $T$ has positive Hausdorff dimension. This is equivalent to the statement that the fractal set $K = \partial T \subset [0,1]$ obtained by $q$-adic expansion along the infinite geodesics based at $o$ in $T$ has positive Hausdorff dimension (see \cite[\S1.2]{BP}).

Let $S_n(T) = \{ v \in V(T) \, | \, d(o,v) = n \}$ be the sphere of radius $n$ in $T$, centred at~$o$. In this section we will adopt the following notion of density: We say that $\E \subseteq T$ has positive lower density if 
$$
d_*(\E)=\liminf_{N \to \infty} \frac{1}{N}\sum_{n=1}^{N}\frac{|\E\cap S_n(T)|}{|S_n(T)|}>0.
$$
It is clear that if $d_*(\E)>0$ then $d^*(\E)>0$ too.

Theorem~\ref{thm:main1} raises the following natural question (we thank Itai Benjamini for asking us this question). For $t\in\NN$ and $\E\subseteq V$ let 
$
\E^t=\{x\in \E\mid \text{there exists $y\in\E$ with $d(x,y)=t$}\}.
$

\begin{question} Let $T\subseteq T_q^o$ be as above. Is it true that if $T$ has positive Hausdorff dimension, and if $\E \subseteq V(T)$ has positive lower density $d_*(\E)>0$, then there exists a subgroup $H$ of $\ZZ$ such that for sufficiently large $t\in H$ we have $\E \cap \E^t \neq \emptyset$? In other words, does there exist $k \geq 1$ and $K>0$ such that if $t \geq K$ then $\E \cap \E^{kt} \neq \emptyset$?
\end{question}

Note that Theorem~\ref{thm:main1} gives an affirmative answer in the case that $T=T_q^o$. However we will show below that generally the answer to the above question is negative. 

\begin{prop}
There exists  a tree $T \subset T_2^o$ of positive Hausdorff dimension, and a subset $\E \subset V(T)$ of positive lower density $d_*(\E)>0$ such that for any $k, K \ge 1$ there exists $t \ge K$ such that $\E \cap \E^{kt} = \emptyset$.
\end{prop}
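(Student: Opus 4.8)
The plan is to construct $T$ and $\E$ simultaneously by a "telescoping" construction that defeats every pair $(k,K)$. The obstruction to the question must come from the geometry of $T$ itself: we want long stretches of radii where the sphere $S_n(T)$ is very "thin" (so that positive lower density is cheap to maintain) interleaved with controlled branching (so that Hausdorff dimension stays positive), arranged so that the set of radii at which $\E$ actually lives, call it $R\subseteq\NN$, has the property that $R-R$ (the difference set) misses arbitrarily long initial segments of every nontrivial subgroup $k\ZZ$. Concretely I would fix an increasing sequence of "scales" $N_1\ll N_2\ll N_3\ll\cdots$ growing fast (say super-exponentially), and build $T$ so that between radius $N_{2i}$ and $N_{2i+1}$ the tree is a single path (no branching at all), while between $N_{2i-1}$ and $N_{2i}$ every vertex has $2$ children. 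Then $\E$ will be taken to be (essentially) all of $V(T)$ in the branching bands together with the single-vertex spheres in the path bands — but with a crucial twist described next.

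The key point is to kill $\E\cap\E^{kt}$ for large $t$. Note first that since $T$ has branching only in the bands $[N_{2i-1},N_{2i}]$, any two vertices $x,y\in V(T)$ with $d(x,y)=m$ and (say) $d(o,x)\le d(o,y)$ have $y$ a descendant of $x$, and $m=d(o,y)-d(o,x)$ unless the geodesic from $x$ to $y$ bends at their common ancestor — but bending requires a branching vertex, hence requires the common ancestor to lie in a branching band. So if I arrange that $\E$ contains, in each branching band, only vertices lying on one single fixed ray (thereby making $\E$ within branching bands non-branching too, for the purposes of containing pairs), then for $x,y\in\E$ the distance $d(x,y)$ is \emph{forced} to equal $|d(o,x)-d(o,y)|$. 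Thus $\E\cap\E^m\ne\emptyset$ iff there exist radii $a<b$ with $a,b\in R:=\{n: \E\cap S_n(T)\ne\emptyset\}$ and $b-a=m$. So the whole problem reduces to: choose $R\subseteq\NN$ with positive lower density (in the averaged sense above) such that for every $k\ge 1$ the set $R-R$ contains arbitrarily large elements of $\NN$ that are \emph{not} in $k\ZZ$ — wait, we need the \emph{opposite}: we need, for every $k,K$, some $t\ge K$ with $kt\notin R-R$. Equivalently $R-R$ omits arbitrarily large multiples of each $k$. This is easy: take $R$ to be a union of intervals $[N_{2i-1},N_{2i}]\cup\{$the endpoints/sparse subset of path bands$\}$ with the lengths $N_{2i}-N_{2i-1}$ and the gaps growing so fast that $R-R$ near scale $N_j$ consists only of numbers that are either $\le N_{2i}-N_{2i-1}$ (small relative to the gap) or within a bounded window of $N_{j}-N_{j'}$; by diophantine bookkeeping one ensures $k t\notin R-R$ for a suitable $t\approx$ the current gap.

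Then I would verify the two "resource" conditions. \emph{Positive Hausdorff dimension:} using the criterion in \cite[\S1.2]{BP}, compute the branching number / lower box dimension of $\partial T$ from the proportion of radii spent in branching bands; choosing $\sum$ of branching-band lengths to be a positive fraction of each initial segment $[0,N_{2i}]$ forces $\dim_H K>0$ (one gets $\dim_H K \gtrsim \liminf \frac{\#\{\text{branching radii}\le n\}\cdot\log 2}{n}$, positive by design — here the fast growth of gaps must be balanced so branching bands still occupy, say, a fixed fraction $\delta$ of every prefix, which is arranged by making each $N_{2i}$ a fixed multiple of $N_{2i-1}$). \emph{Positive lower density $d_*(\E)>0$:} in the path bands $|S_n(T)|=1$ and $\E\cap S_n(T)=S_n(T)$, contributing $1$ to each averaged term there; since path bands also occupy a fixed fraction of every prefix, the Cesàro average stays bounded below. (In branching bands one either includes all of $S_n(T)$ in $\E$ — giving ratio $1$ — or, if for the distance argument we restricted $\E$ to one ray, the ratio is $2^{-(\text{band depth})}$, which could be tiny; this is the tension to resolve.)

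\textbf{Main obstacle.} The genuine difficulty is the tension between (i) needing $\E$ to be "non-branching" (contained in a single ray within each branching band) so that distances in $\E$ are rigidly $b-a$, which makes $\E$ sparse inside branching bands, versus (ii) needing $d_*(\E)>0$, versus (iii) needing $\dim_H \partial T>0$, which \emph{requires} genuine branching somewhere. The resolution I expect to pursue: let $T$ branch in the bands $[N_{2i-1},N_{2i}]$ to guarantee dimension, but put $\E$ only in the \emph{path} bands $[N_{2i},N_{2i+1}]$ (where $|S_n(T)|=1$, so $\E\cap S_n(T)/|S_n(T)|=1$ for free, handling (ii)); then $R$ is exactly the union of path-band radius-sets, every pair in $\E$ again realises distance $b-a$ since between two path-band vertices the geodesic, even though it may pass through branching bands, goes straight \emph{down} a fixed ray as long as we pick the path bands to all lie on one common ray — and one checks that, path bands being intervals of rapidly growing location with controlled length, $R-R$ avoids $k\ZZ\cap[K,\infty)$ for every $k,K$ by choosing $N_{2i+1}-N_{2i}$ and the $N_j$'s along a fast-growing Liouville-type sequence. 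Getting the growth rates of $(N_j)$ to satisfy the dimension lower bound, the density lower bound, and the diophantine avoidance \emph{simultaneously} is the crux, and I would handle it by fixing the two fractions (fraction of each prefix spent branching, fraction spent on path) first as positive constants, and only then choosing the absolute scale of each successive $N_j$ large enough to kill the next finitely many pairs $(k,K)$.
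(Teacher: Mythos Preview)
Your proposal has a genuine gap: the claim that $|S_n(T)|=1$ in the path bands is false. Since $T$ is required to have no leaves, every vertex produced in a branching band survives to all later levels, so after the first branching band $[N_1,N_2]$ one already has $|S_n(T)|\ge 2^{N_2-N_1}$ for all $n\ge N_2$, and this only grows. The path bands therefore consist of many parallel rays, not one, and this wrecks both of your suggested constructions. If $\E$ lies on a single fixed ray, then $|\E\cap S_n(T)|/|S_n(T)|\le 2^{-|B\cap[0,n]|}$ where $B$ is the set of branching radii; positive Hausdorff dimension forces $B$ to have positive density, so this ratio decays exponentially and $d_*(\E)=0$. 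If instead $\E=\bigcup_{n\in R}S_n(T)$ with $R$ the set of path-band radii (so that the sphere ratios equal $1$ on $R$), then the distance set is all of $\NN$: for $v\in S_a$ and $v'\in S_b$ on distinct rays the nearest common ancestor sits at a branching level $c\in B$, giving $d(v,v')=a+b-2c$; since $R$ and $B$ contain adjacent intervals at each scale $N_{2i}$, this expression realises every integer in $[1,\,2(N_{2i+1}-N_{2i-1})]$, hence (letting $i\to\infty$) every positive integer. Thus already for $k=1$ no admissible $t$ exists. No Liouville-type choice of the $N_j$ can repair this, because the obstruction is the interval structure of $R$ and $B$, not their locations.

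The paper resolves the tension you correctly identified not with interval bands but with arithmetic structure. It takes the branching set and the $\E$-level set both equal to a single non-periodic Bohr set $A=\{n:\{n\sqrt 2\}<\epsilon\}$ with $\epsilon<1/4$, sets $T=T_A$ (branching exactly at levels in $A$) and $\E=\bigcup_{n\in A}S_n(T_A)$. Then $\dim_H\partial T_A=d(A)>0$ and $d_*(\E)\ge d(A)>0$, and by the same common-ancestor formula every distance in $\E$ lies in $(A-A)+(A-A)$, which is again a non-periodic Bohr set, of density $4\epsilon<1$; equidistribution of such a set along every arithmetic progression then yields, for each $k$, arbitrarily large $t$ with $kt\notin(A-A)+(A-A)$. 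The point your construction misses is that once $\E$ occupies full spheres (which it must, for density), distances are governed by $(R-B)+(R-B)$ rather than by $R-R$, and controlling an \emph{iterated} difference set requires genuine arithmetic constraints on $R$ and $B$---something unions of intervals can never provide.
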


\begin{proof}
For a subset $A\subseteq \ZZ$ we denote the \textit{density} of $A$ by 
$$
d(A)=\lim_{n\to \infty}\frac{|\{x\in A\mid -n\leq x\leq n\}|}{2n+1}
$$
provided the limit exists.

Let $A \subseteq \mathbb{Z}$ be a non-periodic Bohr set of density $d(A)$ with $d((A-A) + (A-A)) < 1$.  An explicit example is given by $A=\{n\in\ZZ\mid \{n\sqrt{2}\}<\epsilon\}$, where $\{x\}=x-\lfloor x\rfloor\geq 0$ denotes the fractional part of~$x\in\RR$, and $\epsilon<1/4$. Then $d(A)=\epsilon$ and $d((A-A)+(A-A))=4\epsilon$.

Let $T_A\subseteq T_2^o$ be the tree which has branching at each vertex of level $n$ for all $n \in A$. The Hausdorff dimension of $T_A$ is $\dim(T_A)=d(A)>0$ (see \cite[Example~3.3]{BP}). Now choose
 $$
 \E =\bigcup_{n \in A\,\cap\,\NN} S_n(T_A).
 $$
 It is clear that $d_*(\E)\geq d(A)$, and hence $d_*(\E)>0$. Moreover, if $v,v'\in \E$ then $d(v,v')\in (A-A)+(A-A)$. Since $A$ is non-periodic and $d((A-A)+(A-A)) < 1$ we see that $(A-A)+(A-A)$ is also a non-periodic Bohr set. Any non-periodic Bohr set $B \subseteq \mathbb{Z}$ satisfies the uniformity property along any infinite arithmetic progression:  for every $k \ge 1$ and any $m \in \{0,1,2,\ldots,k-1\}$ we have
\[
d(B) = \frac{d(B \cap (m + k\ZZ))}{k},
\]  
completing the proof.
 \end{proof}

\section{Sets of positive density in affine buildings}\label{sec:buildings}

In this section we extend the results of Section~\ref{sec:trees} to affine buildings of certain types (note that the tree $T_{q}$ is an affine building of type $\tilde{A}_1$). Sections~\ref{subsec:1}, \ref{subsec:2} and~\ref{subsec:3} recall the required background from the theory of affine buildings, mainly following the setup from~\cite{Par:06a,Par:06b} (see \cite{AB:08} for a comprehensive reference to building theory). In Section~\ref{subsec:5a} we define sets of positive density, and state our main theorem on sets of positive density in affine buildings. Section~\ref{subsec:4a} develops the theory of atoms required to give the proof of the main theorem in Section~\ref{subsec:6a}. The importance of affine buildings stems from their appearance in the theory of $p$-adic Lie groups, where they play an analogous role to the symmetric space for real Lie groups (see \cite{BT:72}; however in dimensions~$1$ and $2$ we note that not all affine buildings are associated to such a group). Thus we conclude in Section~\ref{subsec:6} with an application of our results to sets of positive density in $p$-adic Lie groups.

\subsection{Affine Coxeter groups and the Coxeter complex}\label{subsec:1}

Recall that a \textit{Coxeter system} $(W,S)$ is a group $W$ generated by a finite set $S$ with relations $(st)^{m_{st}}=1$ for all $s,t\in S$, where $m_{ss}=1$ for all $s\in S$, and $m_{st}=m_{ts}\in\ZZ_{\geq 2}\cup\{\infty\}$ for all $s\neq t$ (if $m_{st}=\infty$ then it is understood that there is no relation between $s$ and $t$). The \textit{length} of $w\in W$ is 
$$
\ell(w)=\min\{k\geq 0\mid w=s_1\cdots s_k\,\,\text{with $s_1,\ldots,s_k\in S$}\},
$$
and an expression $w=s_1\cdots s_k$ with $k$ minimal (that is, $k=\ell(w)$) is called a \textit{reduced expression} for $w$. A Coxeter system $(W,S)$ is \textit{irreducible} if $S$ cannot be partitioned into nonempty sets $S_1$ and $S_2$ with $st=ts$ for all $s\in S_1$ and $t\in S_2$, \textit{spherical} if $|W|<\infty$, and \textit{affine} if there exists a normal abelian subgroup $Q\triangleleft W$ of finite index.

All irreducible affine Coxeter systems arise in the following concrete way. Let $\Phi$ be a reduced, irreducible, crystallographic, finite root system in an $n$-dimensional real inner product space $E$ (see \cite[Chapter~VI]{bourbaki}). The \textit{dual root system} is $\Phi^{\vee}=\{\alpha^{\vee}\mid \alpha\in \Phi\}$, where $\alpha^{\vee}=2\alpha/\langle\alpha,\alpha\rangle$. Let $\{\alpha_1,\ldots,\alpha_n\}\subseteq \Phi$ be a choice of simple roots of $\Phi$, and let $\Phi^+\subseteq \Phi$ be the associated set of positive roots. The root system $\Phi$ has a unique \textit{highest root} $\varphi=m_1\alpha_1+\cdots+m_n\alpha_n$ (the \textit{height} of a root $\alpha=a_1\alpha_1+\cdots+a_n\alpha_n$ is $a_1+\cdots+a_n$).

The \textit{Weyl group} of $\Phi$ is the finite subgroup $W_0$ of $GL(E)$ generated by the orthogonal reflections $s_{\alpha}$ in the hyperplanes $H_{\alpha}=\{x\in E\mid \langle x,\alpha\rangle=0\}$ for $\alpha\in \Phi$. Let $s_i=s_{\alpha_i}$ for $1\leq i\leq n$, and let $S_0=\{s_i\mid 1\leq i\leq n\}$. Then $(W_0,S_0)$ is an irreducible spherical Coxeter system. Let $w_0\in W_0$ be the \textit{longest element} of $W_0$ (the unique element of maximal length). 

For each $\alpha\in \Phi$ and each $k\in\ZZ$ let $H_{\alpha,k}=\{x\in E\mid \langle x,\alpha\rangle=k\}$ (thus the affine hyperplane $H_{\alpha,k}$ is a translate of the linear hyperplane $H_{\alpha}=H_{\alpha,0}$). Let $s_{\alpha,k}\in\mathrm{Aff}(E)$ be the affine orthogonal reflection in $H_{\alpha,k}$, given by $s_{\alpha,k}(x)=x-(\langle x,\alpha\rangle-k)\alpha^{\vee}$. The \textit{affine Weyl group} of $\Phi$ is the subgroup of $\mathrm{Aff}(E)$ generated by the reflections $s_{\alpha,k}$ with $\alpha\in\Phi$ and $k\in\ZZ$. 

Writing $s_0=s_{\varphi,1}$ (with $\varphi$ the highest root) and $S=S_0\cup\{s_0\}$, the pair $(W,S)$ is a Coxeter system. Moreover, 
$$
W=Q\rtimes W_0\quad\text{where $Q=\ZZ\alpha_1^{\vee}+\cdots+\ZZ\alpha_n^{\vee}\cong \ZZ^n$ is the \textit{coroot lattice}},
$$
where we identify $\la\in Q$ with the translation $t_{\la}\in\mathrm{Aff}(E)$ given by $t_{\la}(x)=x+\la$. Thus $(W,S)$ is an affine Coxeter system, and all irreducible affine Coxeter systems arise in this way. In the standard Lie theory nomenclature $\Phi$ has a ``type'' $X_n$, where $X\in\{A,B,C,D,E,F,G\}$, and we say that $(W,S)$ has ``type'' $\tilde{X}_n$, and that $(W,S)$ has \textit{dimension}~$n$.

The \textit{fundamental coweights} are the dual basis $\omega_1,\ldots,\omega_n\in E$ to $\alpha_1,\ldots,\alpha_n$, given by $\langle\omega_i,\alpha_j\rangle=\delta_{i,j}$. The \textit{coweight lattice} of $\Phi$ is 
$$
P=\{\lambda\in E\mid \langle \la,\alpha\rangle\in\ZZ\text{ for all $\alpha\in\Phi$}\}=\ZZ\omega_1+\cdots+\ZZ\omega_n.
$$
Note that $Q\subseteq P$ (because $\langle\alpha,\beta^{\vee}\rangle\in\ZZ$ for all $\alpha,\beta\in\Phi$ by the crystallographic condition). Let $\rho=\omega_1+\cdots+\omega_n$. The set of \textit{dominant} coweights and \textit{strongly dominant} coweights are, respectively,
$$
P^+=\ZZ_{\geq 0}\omega_1+\cdots+\ZZ_{\geq0}\omega_n\quad\text{and}\quad P^{++}=\ZZ_{>0}\omega_1+\cdots+\ZZ_{>0}\omega_n=\rho+P^+.
$$
There is a natural partial order on $P^+$ given by $\mu\leq\la$ if and only if $\la-\mu\in P^+$. We write $\mu\ll \la$ if and only if $\la-\mu\in P^{++}$. Thus $\mu\ll\la$ if and only if $\la-\mu\geq \rho$.

The family of hyperplanes $H_{\alpha,k}$, $\alpha\in \Phi$, $k\in\ZZ$, tessellates $E$ into $n$-dimensional geometric simplices, called \textit{chambers} (in the literature these are also called \textit{alcoves}). The \textit{fundamental chamber} is 
$$
\fc_0=\{x\in E\mid \langle x,\alpha_i\rangle\geq 0\text{ for $1\leq i\leq n$, and }\langle x,\varphi\rangle\leq 1\}.
$$
The group $W$ acts simply transitively on the set of chambers, and we often identify $W$ with the set of chambers via $w\leftrightarrow w\fc_0$. The extreme points of the chambers are \textit{vertices}, and each chamber has exactly $n+1$ vertices. The resulting simplicial complex $\Sigma(W,S)$ is called the \textit{Coxeter complex} of the affine Coxeter system. 

Each vertex $x$ of $\Sigma(W,S)$ can be assigned a \textit{type} $\tau(x)\in\{0,1,\ldots,n\}$ as follows.  The fundamental chamber $\fc_0$ has vertices $x_0,x_1,\ldots,x_n$, where $x_0=0$ and $x_i=\omega_i/m_i$ for $1\leq i\leq n$ (with $\omega_i$ and $m_i$ as above), and we declare $\tau(x_i)=i$ for $0\leq i\leq n$. This extends uniquely to all vertices of $\Sigma(W,S)$ by requiring that every chamber has precisely one vertex of each type. The type of a simplex $\sigma$ is $\tau(\sigma)=\{\tau(x)\mid \text{$x$ is a vertex of $\sigma$}\}$, and the \textit{cotype} of $\sigma$ is $\{0,1,\ldots,n\}\backslash\tau(\sigma)$. The action of $W$ on $\Sigma(W,S)$ is type preserving. Both $Q$ and $P$ are subsets of the vertex set of $\Sigma(W,S)$. Specifically, $Q$ is the set of all type $0$ vertices, and $P$ is the set of all vertices $x$ with $\tau(x)\in \{0\}\cup\{1\leq i\leq n\mid m_i=1\}$. Equivalently, $P$ is the set of vertices $x$ of $\Sigma(W,S)$ whose stabiliser in $W$ is isomorphic to $W_0$.

\noindent\begin{minipage}{0.5\textwidth}
\begin{figure}[H]
\begin{center}
\begin{tikzpicture}[scale=0.8]
\path [fill=lightgray!70] (0,0) -- (4.25,0) -- (4.25,4.25) -- (0,0);
\path [fill=gray!90] (0,0) -- (1,0) -- (1,1) -- (0,0);
\draw (-4.25,-2) -- (4.25,-2); 
\draw (-4.25,-1) -- (4.25,-1);
\draw (-4.25,0) -- (4.25,0);
\draw (-4.25,1) -- (4.25,1);
\draw (-4.25,2) -- (4.25,2);
\draw (-4.25,3) -- (4.25,3); 
\draw (-4.25,4) -- (4.25,4);
\draw (-4.25,-3) -- (4.25,-3);
\draw (-4.25,-4) -- (4.25,-4);
\draw (-2,-4.25) -- (-2,4.25);
\draw (-1,-4.25) -- (-1,4.25);
\draw (0,-4.25) -- (0,4.25);
\draw (1,-4.25) -- (1,4.25);
\draw (2,-4.25) -- (2,4.25);
\draw (-4,-4.25) -- (-4,4.25);
\draw (-3,-4.25) -- (-3,4.25);
\draw (3,-4.25) -- (3,4.25);
\draw (4,-4.25) -- (4,4.25);
\draw (-4.25,3.75)--(-3.75,4.25);
\draw (-4.25,1.75)--(-1.75,4.25);
\draw (-4.25,-0.25) -- (0.25,4.25);
\draw (-4.25,-2.25) -- (2.25,4.25);
\draw (-4.25,-4.25) -- (4.25,4.25);
\draw (-2.25,-4.25) -- (4.25,2.25);
\draw (-0.25,-4.25) -- (4.25,0.25);
\draw (1.75,-4.25)--(4.25,-1.75);
\draw (3.75,-4.25)--(4.25,-3.75);
\draw (4.25,3.75)--(3.75,4.25);
\draw (4.25,1.75)--(1.75,4.25);
\draw (4.25,-0.25) -- (-0.25,4.25);
\draw (4.25,-2.25) -- (-2.25,4.25);
\draw (4.25,-4.25) -- (-4.25,4.25);
\draw (2.25,-4.25) -- (-4.25,2.25);
\draw (0.25,-4.25) -- (-4.25,0.25);
\draw (-1.75,-4.25)--(-4.25,-1.75);
\draw (-3.75,-4.25)--(-4.25,-3.75);
\node at (0,0) {$\bullet$};
\node at (-2,0) {$\bullet$};
\node at (2,0) {$\bullet$};
\node at (-2,-2) {$\bullet$};
\node at (0,-2) {$\bullet$};
\node at (2,-2) {$\bullet$};
\node at (0,2) {$\bullet$};
\node at (-2,2) {$\bullet$};
\node at (2,2) {$\bullet$};
\node at (-4,-4) {$\bullet$};
\node at (-4,-2) {$\bullet$};
\node at (-4,0) {$\bullet$};
\node at (-4,2) {$\bullet$};
\node at (-4,4) {$\bullet$};
\node at (-2,-4) {$\bullet$};
\node at (-2,4) {$\bullet$};
\node at (0,-4) {$\bullet$};
\node at (0,4) {$\bullet$};
\node at (2,-4) {$\bullet$};
\node at (2,4) {$\bullet$};
\node at (4,-4) {$\bullet$};
\node at (4,-2) {$\bullet$};
\node at (4,0) {$\bullet$};
\node at (4,2) {$\bullet$};
\node at (4,4) {$\bullet$};
\node at (1,1) {\scriptsize{$\Box$}};
\node at (1,3) {\scriptsize{$\Box$}};
\node at (1,-1) {\scriptsize{$\Box$}};
\node at (1,-3) {\scriptsize{$\Box$}};
\node at (-1,1) {\scriptsize{$\Box$}};
\node at (-1,3) {\scriptsize{$\Box$}};
\node at (-1,-1) {\scriptsize{$\Box$}};
\node at (-1,-3) {\scriptsize{$\Box$}};
\node at (-3,1) {\scriptsize{$\Box$}};
\node at (-3,3) {\scriptsize{$\Box$}};
\node at (-3,-1) {\scriptsize{$\Box$}};
\node at (-3,-3) {\scriptsize{$\Box$}};
\node at (3,1) {\scriptsize{$\Box$}};
\node at (3,3) {\scriptsize{$\Box$}};
\node at (3,-1) {\scriptsize{$\Box$}};
\node at (3,-3) {\scriptsize{$\Box$}};
%
%
%
%
\node at (2.7,-1.7) {\small{$\alpha_1^{\vee}$}};
\node at (0.7,2.3) {\small{$\alpha_2^{\vee}$}};
\node at (2.7,0.2) {\small{$\omega_1$}};
\node at (1.6,1.15) {\small{$\omega_2$}};
\node at (3.2,0.9) [above right] {\small{$\rho$}};
\draw [latex-latex,line width=1pt] (0,-2)--(0,2);
\draw [latex-latex,line width=1pt] (-2,0)--(2,0);
\draw [latex-latex,line width=1pt] (-2,-2)--(2,2);
\draw [latex-latex,line width=1pt] (-2,2)--(2,-2);
\draw [-latex, line width=1pt] (0,0)--(1,1);
\node at (-2.1,3.8) [below right] {\small{$x$}};
\node at (2.2,-4.1) [above right] {\small{$y$}};
\end{tikzpicture}
\caption{The Coxeter complex of type $\tilde{C}_2$}
\label{fig:rootsystem}
\end{center}
\end{figure}
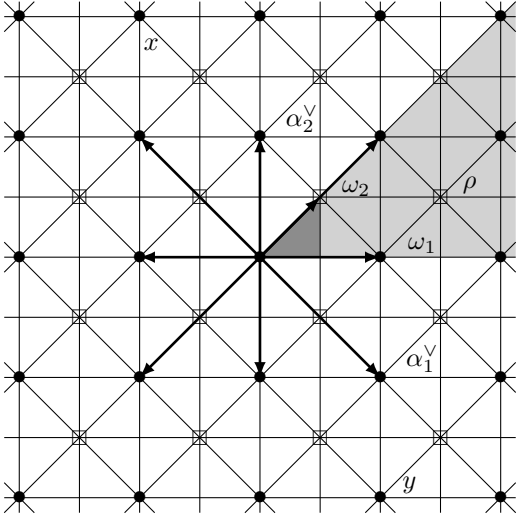
\end{minipage}
\begin{minipage}{0.5\textwidth}
The root system of type $C_2$ is 
$$\Phi=\pm\{\alpha_1,\alpha_2,\alpha_1+\alpha_2,2\alpha_1+\alpha_2\},$$ where $\alpha_1=e_1-e_2$ and $\alpha_2=2e_2$. We have $\alpha_1^{\vee}=\alpha_1$ and $\alpha_2^{\vee}=e_2$, and the dual root system is $\Phi^{\vee}=\pm\{\alpha_1^{\vee},\alpha_2^{\vee},\alpha_1^{\vee}+\alpha_2^{\vee},\alpha_1^{\vee}+2\alpha_2^{\vee}\}$. The fundamental coweights are $\omega_1=e_1$ and $\omega_2=\frac{1}{2}e_1+\frac{1}{2}e_2$. The coroot lattice $Q$ is the set of $\bullet$ vertices, and the coweight lattice~$P$ is the union of the $\bullet$ and {\scriptsize$\Box$} vertices. The fundamental chamber is darkly shaded, and the cone of dominant coweights is lightly shaded. The points $x,y$ are marked for later reference. 
\end{minipage}

\subsection{Affine buildings}\label{subsec:2}

Let $(W,S)$ be an affine Coxeter system of dimension~$n$, as constructed in the previous section. Let $\Delta$ be an affine building of type $(W,S)$. Let us briefly expand on this (see \cite{AB:08}). Thus $\Delta$ is a very special kind of simplicial complex, whose maximal simplices are called \textit{chambers}, and all chambers of $\Delta$ have dimension~$n$. Moreover, $\Delta$ is equipped with a distinguished collection of sub-simplicial complexes, called \textit{apartments}, satisfying three axioms:\smallskip
\begin{compactenum}
\item[(B1)] all apartments are isomorphic to the Coxeter complex $\Sigma(W,S)$;
\item[(B2)] if $c,d\in\Delta$ are chambers of $\Delta$, then there exists an apartment $A$ containing both of them;
\item[(B3)] if $A,A'$ are apartments containing a common chamber, then there exists a unique simplicial complex isomorphism $\psi:A\to A'$ fixing every simplex of $A\cap A'$. 
\end{compactenum} \smallskip

Thus one may regard $\Delta$ as being made by ``gluing together'' many copies of $\Sigma(W,S)$. Axiom~(B2) tells us that when determining the relative position between two simplices we can make the measurement in an apartment, and the content of Axiom~(B3) is that the measurement we obtain is independent of the particular apartment chosen.
\smallskip

\noindent\begin{minipage}{0.7\textwidth}
\hspace{0.7cm}A \textit{panel} is a codimension~$1$ simplex $\pi$ of $\Delta$. Chambers $c,d$ of $\Delta$ are called \textit{adjacent} (written $c\sim d$) if $c\cap d$ is a panel. The figure illustrates the local picture in the $n=2$ case. Here each chamber has $3$ vertices, and panels are edges. The $5$ chambers shown all share a common panel, and hence are mutually adjacent. We say that $\Delta$ has \textit{uniform thickness parameter} $q$ if $|\{c\mid c\sim d\}|=q$ for all chambers $d$ of $\Delta$. The figure illustrates the local picture in the case $q=4$.
\end{minipage}
\begin{minipage}{0.3\textwidth}
\begin{center}
\begin{tikzpicture}[scale=0.3]
\path [fill=lightgray!70] (0,5)--(-5,4) -- (0,-2) -- (0,5);
\path [fill=lightgray!70] (0,-2)--(5,1) -- (0,5) -- (0,-2);
\path [fill=lightgray!70] (2.917, 2.666)-- (5, 6)--(0,5)--(2.917, 2.666);
\path [fill=lightgray!70] (1.321, 5.264)-- (2, 9)--(0,5)--(1.321, 5.264);
\path [fill=lightgray!70] (-2.5926, 4.4815)-- (-4, 8)--(0,5)--(-2.5926, 4.4815);
    \draw (-5, 4)--( 0, -2);
   \draw (0, 5)-- (-5, 4);
    \draw (0 ,-2) --(0, 5);
    \draw (0, -2)-- ( 5, 1);
    \draw (5, 1) --(0, 5);
    \draw (0, 5)-- (2, 9);
   \draw (2.917, 2.666)-- (5, 6);
    \draw  (2, 9) -- (1.321, 5.264);
    \draw ( 0, 5)--  (5, 6);
    \draw (-2.5926, 4.4815) --( -4, 8);
    \draw  (-4, 8) -- (0, 5);
     \draw[style=dashed]  (0, -2)--  (2.917, 2.666 );
    \draw[style=dashed] (1.321, 5.264)-- ( 0, -2);
    \draw[style=dashed] (0, -2)-- (-2.5926, 4.4815);
\end{tikzpicture}
\end{center}
\end{minipage}

\begin{example}
The $A_1$ root system is $\Phi=\{-\alpha,\alpha\}$ in the $1$-dimensional space $E=\RR\alpha$. The Coxeter complex of the associated affine Coxeter system is a tessellation of $\RR$ by intervals. It is thus clear from the axioms above that $\tilde{A}_1$ buildings are simply trees in which every vertex has valency at least~$2$ (that is, there are no ``leaves''). The chambers are the edges, and the panels are the vertices. In this case the building is easy to draw, however in higher dimension the ``thickness'' of the building is difficult to visualise, and so our pictures are typically of a piece of an apartment of $\Delta$, with the branching left to the reader's imagination.
\end{example}

Fix, once and for all, an apartment $A_0$ of $\Delta$, and an isomorphism $\psi_0:A_0\to\Sigma(W,S)$, and identify $A_0$ with $\Sigma(W,S)$ via~$\psi_0$. Thus we regard $\Sigma(W,S)\equiv A_0$ as an apartment of $\Delta$ (the \textit{base apartment}). We write $o=\psi_0^{-1}(0)$ (the \textit{root} of $\Delta$). The type map $\tau$ on $\Sigma(W,S)$ extends uniquely to all vertices of $\Delta$ by requiring that every chamber has precisely one vertex of each type.

For each $0\leq i\leq n$ we define an adjacency relation $\sim_i$ on chambers of $\Delta$ by setting $c\sim_i d$ if and only if $c\cap d$ is a panel of cotype~$i$ (that is, $c$ and $d$ share all vertices except for their type $i$ vertices). Then $c\sim d$ if and only if $c\sim_i d$ for some $0\leq i\leq n$. The \textit{relative position} $\delta(c,d)\in W$ between chambers $c,d$ of $\Delta$ is defined by choosing a path
\begin{align}\label{eq:gallery}
c=c_0\sim_{i_1}c_1\sim_{i_2}\cdots\sim_{i_k}c_k=d
\end{align}
of minimal length joining $c$ to $d$, and setting 
$
\delta(c,d)=s_{i_1}\cdots s_{i_k}.
$
The building axioms ensure that the value of $\delta(c,d)\in W$ is independent of the particular choice of minimal path made in~(\ref{eq:gallery}). The \textit{numerical distance} $\mathrm{dist}(c,d)$ between $c$ and $d$ is the length of a minimal length gallery from $c$ to $d$. Thus $\mathrm{dist}(c,d)=\ell(\delta(c,d))$. A sequence of chambers as in~(\ref{eq:gallery}) is called a \textit{gallery} of \textit{type} $(i_1,\ldots,i_k)$ in the building theory vernacular. A basic fact is that a gallery of type $(i_1,\ldots,i_k)$ joining $c$ to $d$ has minimal length amongst all galleries joining $c$ to $d$ if and only if $s_{i_1}\cdots s_{i_k}$ is a reduced expression (that is, $\ell(s_{i_1}\cdots s_{i_k})=k$).

Let $V$ denote the set of all vertices of $\Delta$. Let $V_Q=\{x\in V\mid \tau(x)=0\}$, and let 
$$
V_P=\{x\in V\mid \tau(x)\in \tau(P)\}.
$$
Then $V_Q\subseteq V_P$. The elements of $V_P$ are called the \textit{special vertices} of $\Delta$. For all affine buildings other than those of type $\tilde{A}_n$ the set $V_P$ is a strict subset of $V$. 

\subsection{Vector distance and spheres}\label{subsec:3}

Henceforth we let $(W,S)$ be an irreducible affine Coxeter system of dimension~$n$, and let $\Delta$ be an affine building of type $(W,S)$ with uniform thickness parameter $2\leq q<\infty$. 

Let $x,y\in V_P$ be special vertices of $\Delta$. The \textit{vector distance} $\vect{d}(x,y)\in P^+$ from $x$ to $y$ is defined as follows. Choose an apartment $A$ containing $x$ and $y$ (using (B2)), and let $\psi:A\to\Sigma(W,S)$ be a type preserving simplicial complex isomorphism (using (B1)). Define
$$
\vect{d}(x,y)=(\psi(y)-\psi(x))^+,
$$
where for $\mu\in P$ we write $\mu^+$ for the unique element in $W_0\mu\cap P^+$. This value is independent of the choice of apartment $A$ and isomorphism $\psi$ (using (B3); see \cite[Proposition~5.6]{Par:06a}). Somewhat more informally, to compute $\vect{d}(x,y)$ one looks at the vector from $x$ to $y$ (in any apartment containing $x$ and $y$) and takes the dominant representative of this vector under the $W_0$-action. For example, if $x,y\in V_P$ lie in an apartment as illustrated in Figure~\ref{fig:rootsystem} then $\vect{d}(x,y)=2\omega_1+4\omega_2$.

We have (\cite[Proposition~5.8]{Par:06a})
$$
\vd(y,x)=\vd(x,y)^*\quad\text{for all $x,y\in V_P$},
$$
where $\la^*=-w_0\la$ (with $w_0$ the longest element of $W_0$). We say that $\Delta$ is of $(-1)$-type if $w_0$ acts on $E$ by $-1$. Thus $\Delta$ has $(-1)$-type if and only if $\vect{d}(y,x)=\vect{d}(x,y)$ for all $x,y\in V_P$. By direct examination of root systems, the irreducible $(-1)$-type affine buildings are those of types $\tilde{A}_1$, $\tilde{B}_n$ ($n\geq 2$), $\tilde{C}_n$ ($n\geq 2$), $\tilde{D}_n$ ($n\geq 4$ even), $\tilde{E}_7$, $\tilde{E}_8$, $\tilde{F}_4$, and $\tilde{G}_2$. In other words, the affine buildings that are not of $(-1)$-type are those of types $\tilde{A}_n$ ($n\geq 2$), $\tilde{D}_n$ ($n\geq 4$ odd), and $\tilde{E}_6$.

For $\la\in P^+$ and $x\in V_P$ the sphere of radius $\lambda$ and centre $x$ is 
$$
S_{\la}(x)=\{y\in V_P\mid \vect{d}(x,y)=\la\}.
$$
We write $S_{\la}=S_{\la}(o)$. The cardinality $|S_{\la}(x)|$ does not depend on $x\in V_P$. In fact, by \cite[Proposition~1.5]{Par:06b} we have
\begin{align}\label{eq:sphere}
|S_{\la}(x)|=\frac{W_0(q^{-1})}{W_{0\la}(q^{-1})}q^{\langle\la,2\rho\rangle},
\end{align}
where $W_{0\la}=\{w\in W_0\mid w\la=\la\}$ and for finite subsets $U\subseteq W$ we write $U(q^{-1})=\sum_{u\in U}q^{-\ell(u)}$.

\begin{cor}\label{cor:sphere}
Suppose that $\mu\in P^{++}$ and that $\mu\leq \la$. Then 
$$
|S_{\la}|=|S_{\mu}|q^{\langle\la-\mu,2\rho\rangle}.
$$
\end{cor}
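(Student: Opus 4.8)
The plan is to read everything off the explicit cardinality formula~(\ref{eq:sphere}), namely $|S_{\la}(x)| = \frac{W_0(q^{-1})}{W_{0\la}(q^{-1})}\,q^{\langle\la,2\rho\rangle}$, and to observe that the stabiliser factor trivialises precisely because we are dealing with strongly dominant coweights. First I would recall the standard fact that the stabiliser $W_{0\la}$ of a coweight $\la$ in the finite Weyl group $W_0$ is generated by the reflections $s_\alpha$ with $\langle\la,\alpha\rangle=0$. If $\la\in P^{++}$, then $\langle\la,\alpha_i\rangle>0$ for each simple root $\alpha_i$, and since every positive root is a nonnegative integer combination of the simple roots with at least one strictly positive coefficient, we get $\langle\la,\alpha\rangle>0$ for all $\alpha\in\Phi^+$ and hence $\langle\la,\alpha\rangle\neq 0$ for all $\alpha\in\Phi$. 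Therefore $W_{0\la}=\{1\}$ and $W_{0\la}(q^{-1})=1$.

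Next I would check that both $\mu$ and $\la$ are strongly dominant. By hypothesis $\mu\in P^{++}$, and $\mu\leq\la$ means $\la-\mu\in P^+$. Since $P^{++}=\ZZ_{>0}\omega_1+\cdots+\ZZ_{>0}\omega_n$ and $P^+=\ZZ_{\geq 0}\omega_1+\cdots+\ZZ_{\geq 0}\omega_n$, the sum $\la=\mu+(\la-\mu)$ again lies in $P^{++}$. Hence the previous paragraph applies to give $W_{0\mu}(q^{-1})=W_{0\la}(q^{-1})=1$, so from~(\ref{eq:sphere}) we obtain $|S_{\mu}|=W_0(q^{-1})\,q^{\langle\mu,2\rho\rangle}$ and $|S_{\la}|=W_0(q^{-1})\,q^{\langle\la,2\rho\rangle}$.

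Finally I would divide: the common factor $W_0(q^{-1})$ cancels, leaving $|S_{\la}|/|S_{\mu}| = q^{\langle\la,2\rho\rangle-\langle\mu,2\rho\rangle} = q^{\langle\la-\mu,2\rho\rangle}$ by linearity of $\langle\,\cdot\,,2\rho\rangle$, which is the claimed identity $|S_{\la}|=|S_{\mu}|\,q^{\langle\la-\mu,2\rho\rangle}$. There is essentially no obstacle here; the only point requiring a moment's care is the structural claim that strongly dominant coweights have trivial $W_0$-stabiliser, which is a routine consequence of the simple-root expansion of positive roots together with the characterisation of point stabilisers in finite reflection groups. Everything else is direct substitution into~(\ref{eq:sphere}) and cancellation.
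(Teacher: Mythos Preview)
Your proof is correct and follows exactly the paper's approach: the paper's proof is the one-liner ``If $\la,\mu\in P^{++}$ then $W_{0\la}=W_{0\mu}=\{1\}$, and the result follows from~(\ref{eq:sphere}),'' and you have simply filled in the details of why strongly dominant coweights have trivial stabiliser and why $\la\in P^{++}$.
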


\begin{proof}
If $\la,\mu\in P^{++}$ then $W_{0\la}=W_{0\mu}=\{1\}$, and the result follows from~(\ref{eq:sphere}). 
\end{proof}

\subsection{Sets of positive density}\label{subsec:5a}

Recall that $(W,S)$ is an irreducible affine Coxeter system of dimension~$n$, and $\Delta$ is an affine building of type $(W,S)$ with uniform thickness parameter $2\leq q<\infty$. 

The \textit{(upper) density} of a subset $\E\subseteq V_P$ is 
\begin{align}\label{eq:defdensity}
d^*(\E)=\limsup_{\la\to\infty}\frac{|\E\cap S_{\la}|}{|S_{\la}|}=\lim_{\la\to\infty}\left(\sup_{\mu\geq \la}\frac{|\E\cap S_{\mu}|}{|S_{\mu}|}\right),
\end{align}
where the limit is taken with each $\langle\la,\alpha_j\rangle$ tending to~$\infty$. We note that~(\ref{eq:defdensity}) is well defined, because writing $a_{\la}=|\E\cap S_{\la}|/|S_{\la}|$ and $b_{\la}=\sup\{a_{\mu}\mid \mu\geq \la\}$ we have that $b_{\mu}\geq b_{\la}$ whenever $\mu\leq \la$. Writing 
$$
\underline{\la}=\min\{\langle\la,\alpha_j\rangle\mid 1\leq j\leq n\}\rho\quad\text{and}\quad \overline{\la}=\max\{\langle\la,\alpha_j\rangle\mid 1\leq j\leq n\}\rho,
$$
we have $\underline{\la}\leq\la\leq\overline{\la}$ and so $b_{\overline{\la}}\leq b_{\la}\leq b_{\underline{\la}}$ for all $\la\in P^{+}$. Since $\lim_{m\to\infty}b_{m\rho}=L$ exists (by monotone convergence) we have $b_{\la}\to L$ whenever $\la\to\infty$ with each $\langle\la,\alpha_j\rangle\to\infty$. Moreover, as in the tree case, the property of having positive density is easily seen to be independent of the choice of root vertex~$o$ (however the numerical value of $d^*(\E)$ depends on the choice of root).

Recall that we write $\mu\ll\la$ if and only if $\la-\mu\in P^{++}$. For each $k>0$ let 
$$
\mathbb{M}^k=\{(\la_1,\la_2,\ldots,\la_k)\in P^k\mid 0\ll\la_1\ll\la_2\ll\cdots\ll\la_k\}.
$$
Let $\vect{\la}=(\la_i)_{i=1}^k\in\mathbb{M}^k$ and let $\vect{r}=(r_i)_{i=1}^k\in\mathbb{N}^k$. Then by a \textit{$(k,\vect{\la},\vect{r})$-star} we mean a set $Y$ of special vertices of $\Delta$ with a distinguished vertex $v_0\in Y$ (called the \textit{centre} of $Y$) such that
\begin{compactenum}[$(1)$]
\item if $v\in Y$ then $\vect{d}(v_0,v)\in\{2\la_1,\ldots,2\la_k\}$,
\item for each $1\leq i\leq k$ we have $|\{v\in Y\mid \vect{d}(v_0,v)=2\la_i\}|=r_i$.
\end{compactenum}
We call $Y$ \textit{balanced} if $\vect{d}(o,x)$ is constant for all $x\in Y$ (that is, $Y\subseteq S_{\mu}$ for some $\mu\in P^+$). 

The main theorem of this section is the following analogue of Theorem~\ref{thm:main1}. 

\begin{thm}\label{thm:main1buildings}
Suppose that $\Delta$ has $(-1)$-type. Let $\E\subseteq V_P$ with $d^*(\E)>0$. For each $k>0$ and each $\vect{r}\in \mathbb{N}^k$ there exists a constant $K=K(\E,k,\vect{r})>0$ such that $\E$ contains a balanced $(k,\vect{\la},\vect{r})$-star for all sequences $\vect{\la}=(\la_i)_{i=1}^k\in\mathbb{M}^k$ with $\la_1\geq K\rho$.
\end{thm}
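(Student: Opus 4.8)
The plan is to mirror the proof of Theorem~\ref{thm:main1} for trees, with the linear distance parameter $t$ replaced by the dominant coweight parameter $\la\in P^{++}$, and the branching factor $q$ replaced by the exponential weight $q^{\langle\nu,2\rho\rangle}$ governed by the sphere-size formula~\eqref{eq:sphere} and Corollary~\ref{cor:sphere}. First I would set up the correct notion of ``atoms'': for special vertices $x$ and coweights, one should fix a suitable analogue of $C(v,t)$ — the set of special vertices $y$ with $\vect d(o,y)=\vect d(o,v)+\la$ for which $v$ lies on a ``geodesic'' from $o$ to $y$ (i.e.\ $\vect d(v,y)=\la$ and vector distances add up). The key geometric input needed here, and the reason the $(-1)$-type hypothesis appears, is that in a $(-1)$-type building vector distance is symmetric, so that the configuration $\vect d(v_0,v_i^j)=2\la_i$ with all endpoints on a common sphere $S_\mu$ forces the ``children'' of a common vertex $w$ at the relevant level to be genuinely far apart and hence yield a balanced star; without symmetry the would-be star would not be balanced. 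The relevant substitute for ``$C(v,t)$ contains exactly $q$ atoms of $\cF_{n,t_1-1}$'' is Corollary~\ref{cor:sphere}: the number of atoms at two nested strongly-dominant levels $\mu\ll\la$ is $q^{\langle\la-\mu,2\rho\rangle}$, which is $\geq q$ and in fact grows, so the ``proportion $\leq q^{-1}$'' bound of Lemma~\ref{lem:1} still gives ``at most one atom meets $\E$ in $\geq r$ vertices'' once $\la_i-\la_{i-1}\geq\rho$, which is exactly built into the definition of $\mathbb{M}^k$.

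Concretely I would prove the two lemmas analogous to Lemma~\ref{lem:1} and Lemma~\ref{lem:2}. For the analogue of Lemma~\ref{lem:1}: assuming $\E$ has no balanced $(k,\vect\la,\vect r)$-star, call a special vertex $x$ at level $\mu-\nu$ (for an intermediate coweight) ``type $A$'' if at least two of the ``subtrees'' hanging below $x$ (indexed by the first step of a geodesic, i.e.\ by special vertices $y$ with $\vect d(x,y)=\omega$ minimal, or more robustly by the panel/chamber directions at $x$) each meet $\E$ in $\geq r$ vertices of the appropriate sphere; otherwise ``type $B$''. A chain $(x_k,\ldots,x_1)$ of type-$A$ vertices at the successive levels $\mu-\la_k,\ldots,\mu-\la_1$ produces, by choosing one $\E$-point below one branch of $x_1$ as centre $v_0$ and $r$ points below a second branch of each $x_i$, a balanced $(k,\vect\la,r\vect 1)$-star — here using symmetry of $\vect d$ to get $\vect d(v_0,v_i^j)=2\la_i$ — contradiction. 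Hence every such chain contains a type-$B$ vertex, and a depth-first scan through the forest of chains partitions the atoms inside the relevant ball so that in each part at most a $q^{-1}$-proportion meet $\E$ in $\geq r$ points; this needs that at a type-$B$ vertex the branches all carry the same number of atoms, which follows since all the relevant spheres $S_\nu(y)$, $y$ at a fixed level, have equal cardinality. Then the analogue of Lemma~\ref{lem:2} is obtained by iterating $\ell$ times: the proportion of $\cF$-atoms at the finest level ($\la_{1,1}-\rho$, say) meeting $\E$ in $\geq r$ vertices is at most $q^{-\ell}$, each remaining atom contributes $<r$ points, the total atom count is $|S_\mu|/|S_{\la_{1,1}-\rho\,\text{-atom}}|$, and one gets $|\E\cap S_\mu|/|S_\mu| < q^{-\ell} + (\text{error})$ where the error term is $r$ divided by the atom size, which tends to $0$ as $\la_{1,1}\to\infty$ because by Corollary~\ref{cor:sphere} that atom size is $q^{\langle\la_{1,1}-\rho,2\rho\rangle}$ and hence unbounded.

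Finally the theorem follows exactly as in the proof of Theorem~\ref{thm:main1}: if the conclusion fails for some $k,\vect r$, then for every $K$ there is $\vect s(K)\in\mathbb{M}^k$ with $(\vect s(K))_1\geq K\rho$ admitting no balanced star; given $\ell$, recursively build $\vect\la_1,\ldots,\vect\la_\ell$ with $\vect\la_{j}\ll\vect\la_{j+1}$ by $\vect\la_1=\vect s(\ell\rho)$ and $\vect\la_{j}$ from $\vect s$ applied to the previous level, apply the Lemma~\ref{lem:2}-analogue, and conclude $d^*(\E)\leq q^{-\ell}(1+\text{const})$ for all $\ell$, contradicting $d^*(\E)>0$. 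The main obstacle I anticipate is purely combinatorial-geometric rather than conceptual: correctly defining the ``atoms'' and the ``branching at a vertex'' in a higher-rank building so that (i) nested levels at strongly-dominant separation really do partition spheres, (ii) the branch-count is uniform across branches at a fixed level (this is where one must be careful, since a special vertex may have several types of panels/directions and the naive ``children'' decomposition is not as clean as in a tree), and (iii) the two-far-branches argument genuinely yields vector distance exactly $2\la_i$ between the chosen points — this last point is precisely where the $(-1)$-type hypothesis is essential and must be invoked via the identity $\vect d(y,x)=\vect d(x,y)$ stated before Corollary~\ref{cor:sphere}. Verifying convexity/additivity of vector distance along geodesics in $\Delta$ (so that ``$v_0$ and $v_i^j$ sit at vector distance $2\la_i$'') is the technical heart, and I would lean on the retraction and apartment axioms (B1)--(B3) together with the cited results of \cite{Par:06a,Par:06b}.
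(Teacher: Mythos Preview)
Your overall architecture is right, and you have correctly identified where the $(-1)$-type hypothesis enters (symmetry $\vect d(y,x)=\vect d(x,y)$, so that $\vect d(v_0,x)+\vect d(x,v_i^j)=\la_i^*+\la_i=2\la_i$). However, there is a genuine gap in the ``two branches'' step, and it is exactly the point you flagged as the main obstacle.

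In the tree, any two distinct children $u_1,u_2\in C(w,1)$ automatically have the property that points of $C(u_1,t-1)$ and $C(u_2,t-1)$ are at distance $2t$. In a higher-rank building this fails. Once you set up the atoms correctly --- and the paper does this by indexing them not by neighbouring special vertices but by chambers $c\in O(x)$, the chambers opposite $c_x=c(x,o)$ in the residue $\Delta(x)$, setting $C(x,c,\la)=\{y\in C(x,\la)\mid c(x,y)=c\}$ --- the additivity $\vect d(v_0,v)=\vect d(v_0,x)+\vect d(x,v)$ for $v_0\in C(x,c_0,\la)$ and $v\in C(x,c,\la)$ requires not merely $c\neq c_0$ but $\mathrm{dist}(c_0,c)=\ell(w_0)$ (this is the content of Lemma~\ref{lem:doubledistance}). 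Two distinct chambers in $O(x)$ need not be opposite one another, so your definition of ``type~$A$'' as ``at least two branches contain $\geq r$ points of $\E$'' is too weak to force a balanced star.

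The paper repairs this with an extra counting lemma (Lemma~\ref{lem:noise}): for any $c_0\in O(x)$, at least a proportion $\kappa=(1-q^{-1})^{\ell(w_0)}$ of the $q^{\ell(w_0)}$ chambers in $O(x)$ are opposite $c_0$. Consequently ``type~$A$'' must be strengthened to ``more than $(1-\kappa)q^{\ell(w_0)}$ chambers $c\in O(x)$ satisfy $|\E\cap C(x,c,\la)|\geq r$''; then by pigeonhole at least one such $c$ is opposite any chosen $c_0$, and the star can be built. The price is that ``type~$B$'' now only gives a proportion bound of $1-\kappa$ rather than $q^{-1}$, so the iterated estimate becomes $(1-\kappa)^{\ell}$ rather than $q^{-\ell}$; since $0<\kappa<1$ this still tends to~$0$ and the endgame is unchanged. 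Your proposed bound $q^{-\ell}$ is therefore not what one obtains, and without the pigeonhole step via Lemma~\ref{lem:noise} the proof does not go through.
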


Before proving Theorem~\ref{thm:main1buildings} we define projection maps and atoms in affine buildings (Section~\ref{subsec:4a}), and prove a series of preliminary results (Section~\ref{subsec:6a}).

\subsection{Projections and atoms}\label{subsec:4a}

By \cite[Corollary~B.3]{Par:06b}, if $\nu,\mu\in P^+$ with $\mu\leq \nu$, and if $y\in S_{\nu}$, then there is a unique vertex $x\in S_{\mu}$ such that $\vd(x,y)=\nu-\mu$. This allows us to define \textit{projection maps}: If $\mu\leq\nu$ let
$$
\pi_{\nu,\mu}:S_{\nu}\to S_{\mu},\quad\text{where $\pi_{\nu,\mu}(y)$ is the unique vertex $x\in S_{\mu}$ with $\vd(x,y)=\nu-\mu$}.
$$
Now, if $x\in S_{\mu}$ and $\la\in P^+$ then the set of ``$\la$-children'' (or $\la$-descendants) of~$x$ is the set $C(x,\la)$ of those $y\in S_{\mu+\la}$ that project back to~$x$. That is, 
$$
C(x,\la)=\{y\in S_{\la+\mu}\mid \pi_{\la+\mu,\mu}(y)=x\}=\{y\in V_P\mid \vect{d}(x,y)=\la\text{ and }\vect{d}(o,y)=\mu+\la\}.
$$
We write 
$
C(x)=\bigcup_{\la\in P^+}C(x,\la)
$
for the set of all descendants of $x$.

In the case that $\vd(o,x)=\mu\in P^{++}$ and $\la\in P^{++}$ we decompose $C(x,\la)$ further, as follows. First we note that if $z,z'\in V_P$ are any special vertices with $\vd(z,z')\in P^{++}$ then there is a unique chamber $c(z,z')$ such that every gallery 
$
z\in c_0\sim c_1\sim\cdots\sim c_k\ni z'
$
of minimal length subject to $z\in c_0$ and $z'\in c_k$ starts with $c_0=c(z,z')$, as illustrated in Figure~\ref{fig:start}.

\begin{figure}[H]
\centering
\begin{tikzpicture}[scale=1]
%
\draw (-4,-2)--(-8,-2)--(-10,-4)--(-6,-4)--(-4,-2);
\draw (-5,-3)--(-9,-3);
\draw (-5,-2)--(-5,-3);
\draw (-6,-2)--(-6,-4);
\draw (-7,-2)--(-7,-4);
\draw (-8,-2)--(-8,-4);
\draw (-9,-3)--(-9,-4);
\draw (-6,-2)--(-5,-3);
\draw (-8,-2)--(-6,-4);
\draw (-9,-3)--(-8,-4);
\draw (-6,-2)--(-8,-4);
\node at (-10.1,-2.6) {$c(z,z')$};
\node at (-4,-2) [right] {$z'$};
\node at (-10,-4) [left] {$z$};
\node at (-4,-2) {$\bullet$};
\node at (-10,-4) {$\bullet$};
\draw [-latex] (-10,-3) to [bend right=10] (-9.25,-3.75);
\end{tikzpicture}
\caption{The chamber $c(z,z')$}\label{fig:start}
\end{figure}
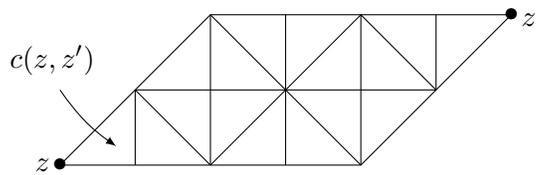

\noindent Let $c_x=c(x,o)$, and let $O(x)$ denote the set of all chambers $c$ of $\Delta$ with $x\in c$ such that $\mathrm{dist}(c_x,c)=\ell(w_0)$. These are the chambers ``opposite'' $c_x$ in the ``residue'' of $x$, and we have $|O(x)|=q^{\ell(w_0)}$. Then, for each $c\in O(x)$ define
\begin{align}\label{eq:decom}
C(x,c,\la)=\{y \in C(x,\la)\mid c(x,y)=c\}.
\end{align}

This situation is illustrated in Figure~\ref{fig:atomsbuilding} for $\tilde{C}_2$ buildings, where $\vect{d}(o,x)=\mu=\omega_1+2\omega_2$ and $\la=2\omega_1+2\omega_2$ (c.f. Figure~\ref{fig:rootsystem}, and note the convention, like in the tree, of drawing the building falling ``downwards'' from $o$). The grey shaded region is determined by $o$ and $x$. There are then $q^4$ choices for the chamber $c\in O(x)$ in the position shown. Then, for each such $c$, the set $C(x,c,\la)$ contains $q^{10}$ vertices (to make this count, choose a minimal length path from $c$ to position~$y$, and at each step there is thickness~$q$; see also Lemma~\ref{lem:atoms}(2) below). In the $1$-dimensional case of trees (that is, $\Phi=A_1$ and $\la\in\NN$), the shaded region is just the geodesic $[o,x]$ joining vertices $o$ and $x$, and the chambers $c$ are just the $q$ edges incident with $x$ and not contained in $[o,x]$. Thus $C(x,c,\la)=C(y,\la-1)$ in the tree case, where the edge $c$ has vertices $x,y$ (see~(\ref{eq:treedecomposition})).

\begin{figure}[H]
\centering
\begin{tikzpicture}[scale=1]
\path [fill=lightgray!70] (0,0)--(-2,0)--(-4,-2)--(-2,-2)--(0,0);
\draw (0,0)--(-2,0)--(-4,-2)--(-2,-2)--(0,0);
\draw (-4,-2)--(-8,-2)--(-10,-4)--(-6,-4)--(-4,-2);
\draw (-1,-1)--(-5,-1);
\draw (-3,-3)--(-9,-3);
\draw (-1,0)--(-1,-1);
\draw (-2,0)--(-2,-2);
\draw (-3,-1)--(-3,-3);
\draw (-4,-1)--(-4,-3);
\draw (-5,-1)--(-5,-3);
\draw (-6,-2)--(-6,-4);
\draw (-7,-2)--(-7,-4);
\draw (-8,-2)--(-8,-4);
\draw (-9,-3)--(-9,-4);
\draw (-2,0)--(-1,-1);
\draw (-3,-1)--(-2,-2);
\draw (-5,-1)--(-3,-3);
\draw (-6,-2)--(-5,-3);
\draw (-8,-2)--(-6,-4);
\draw (-9,-3)--(-8,-4);
\draw (-6,-2)--(-8,-4);
\node at (-4.75,-2.35) {$c$};
\node at (-3.35,-1.75) {$c_x$};
\node at (-9.35,-3.8) {$c_y$};
\node at (0,0) [right] {$o$};
\node at (-3.8,-1.95) [below right] {$x$};
\node at (-10,-4) [left] {$y$};
\node at (0,0) {$\bullet$};
\node at (-4,-2) {$\bullet$};
\node at (-10,-4) {$\bullet$};
\end{tikzpicture}
\caption{Decomposing into atoms}\label{fig:atomsbuilding}
\end{figure}
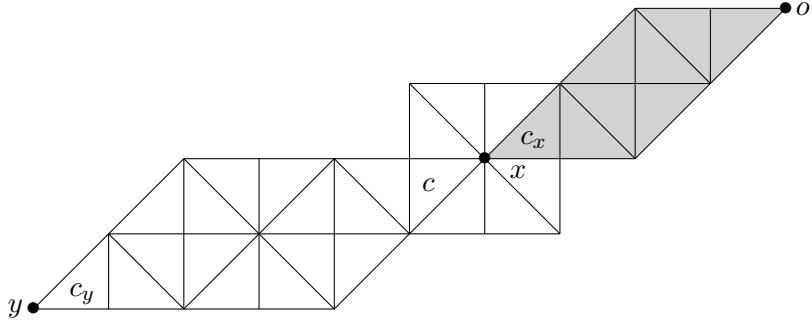

\begin{lemma}\label{lem:atoms}
Let $\la,\mu\in P^{++}$, and write $\nu=\la+\mu$. The members of 
$$
\cA_{\nu,\la}=\{C(x,c,\la)\mid x\in S_{\mu}\text{ and }c\in O(x)\}
$$
form a partition of the sphere $S_{\nu}$. Moreover we have
\begin{compactenum}[$(1)$]
\item $|\cA_{\nu,\la}|=|S_{\mu}|q^{\ell(w_0)}$, and
\item $|C(x,c,\la)|=q^{\langle \la,2\rho\rangle-\ell(w_0)}$, independent of $\mu\in P^{++}$, $x\in S_{\mu}$, and $c\in O(x)$.
\end{compactenum}
\end{lemma}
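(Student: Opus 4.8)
The plan is to establish Lemma~\ref{lem:atoms} in three stages: first showing that every vertex of $S_{\nu}$ lies in exactly one set $C(x,c,\la)$, then counting the atoms, and finally counting the size of each atom.

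\medskip

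\textbf{Step 1: The $C(x,c,\la)$ partition $S_{\nu}$.} Let $y \in S_{\nu}$. By the projection result of \cite[Corollary~B.3]{Par:06b} (with $\mu \le \nu$ since $\la \in P^{++} \subseteq P^+$), there is a unique $x = \pi_{\nu,\mu}(y) \in S_{\mu}$ with $\vd(x,y) = \nu - \mu = \la$, so $y \in C(x,\la)$; and $x$ is the only vertex in $S_{\mu}$ whose $C(\cdot,\la)$ contains $y$. Since $\vd(x,y) = \la \in P^{++}$, the discussion preceding the lemma produces a unique chamber $c(x,y)$ starting every minimal gallery from $x$ to $y$; I would check that $c(x,y) \in O(x)$, i.e.\ that $\dist(c_x, c(x,y)) = \ell(w_0)$. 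This is the geometric heart of the decomposition: it follows because in an apartment containing $o$, $x$, $y$ the vector $\vd(o,x) = \mu$ is strongly dominant, so $c_x$ sits in the "residue" of $x$ on the $o$-side, while $\vd(x,y) = \la$ strongly dominant forces $c(x,y)$ to be the chamber at $x$ on the diametrically opposite side — precisely the condition $\dist(c_x, c(x,y)) = \ell(w_0)$, using that the longest element realizes the diameter of the finite residue and that $(-1)$-type is not even needed here, only strong dominance of $\mu$ and $\la$. So $y$ lies in exactly one $C(x,c,\la)$, namely $x = \pi_{\nu,\mu}(y)$ and $c = c(x,y)$; conversely each $C(x,c,\la) \subseteq S_{\nu}$ by definition. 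This gives the partition.

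\medskip

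\textbf{Step 2: Counting atoms.} The map $(x,c) \mapsto C(x,c,\la)$ is injective on $\{(x,c) : x \in S_{\mu},\, c \in O(x)\}$ because from a nonempty atom one recovers $x$ (as the common projection) and $c$ (as $c(x,y)$ for any $y$ in it); and every atom is nonempty — given $x$ and $c \in O(x)$ one builds a $y$ by extending a minimal gallery out of $c$ in the dominant direction for $\ell(\la)$ steps (this also feeds Step~3). Hence $|\cA_{\nu,\la}| = \sum_{x \in S_{\mu}} |O(x)| = |S_{\mu}| \cdot q^{\ell(w_0)}$, using the stated fact $|O(x)| = q^{\ell(w_0)}$ for all $x$.

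\medskip

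\textbf{Step 3: Size of each atom.} By Corollary~\ref{cor:sphere}, $|S_{\nu}| = |S_{\mu}| q^{\langle \nu - \mu, 2\rho\rangle} = |S_{\mu}| q^{\langle \la, 2\rho\rangle}$ (both $\mu$ and $\nu = \la + \mu$ lie in $P^{++}$). Since the $|\cA_{\nu,\la}| = |S_{\mu}| q^{\ell(w_0)}$ atoms partition $S_{\nu}$, if I can show all atoms have the \emph{same} cardinality then each has size $|S_{\nu}|/|\cA_{\nu,\la}| = q^{\langle \la, 2\rho\rangle - \ell(w_0)}$, which is claim~(2) and in particular is independent of $\mu$, $x$, $c$. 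To see equality of atom sizes, I would use a gallery-counting argument analogous to the tree case: fix a reduced word for the element $w$ of length $\ell(\la) = \langle \la, 2\rho \rangle$ governing translation by $\la$ in a type-preserving way; the vertices of $C(x,c,\la)$ correspond bijectively to minimal galleries of that type starting with the chamber $c$, and at each of the remaining $\ell(w) - \ell(w_0)$ panel-crossings there are exactly $q-1$ choices of new chamber avoiding backtracking while $\ell(w_0)$ of the steps are already pinned by the choice $c \in O(x)$ — wait, more carefully: the count of $y$'s reachable is $q^{\ell(\la) - \ell(w_0)}$ by a standard thickness computation (each free step contributes a factor $q$, matching the picture in Figure~\ref{fig:atomsbuilding}), uniformly in the data because the building has uniform thickness parameter $q$ and the combinatorics depends only on $\la$ via the fixed reduced word. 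Alternatively, and more cleanly, one invokes homogeneity: the stabilizer of $o$ acts transitively on $S_{\mu}$ and, for fixed $x$, the stabilizer of $x$ acts transitively on $O(x)$, so all atoms are in one orbit and hence equinumerous — then the exact value drops out of the partition count as above, bypassing the direct gallery count entirely.

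\medskip

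\textbf{Main obstacle.} The subtle point is Step~1, verifying $c(x,y) \in O(x)$ — that is, identifying the "starting chamber" of minimal galleries from $x$ to $y$ with a chamber opposite $c_x$ in the residue of $x$. This requires carefully unwinding the definition of $O(x)$ and $c(z,z')$ and using that both $\mu$ and $\la$ are strongly dominant so that the relevant local configuration at $x$ is "straight" (the geodesic passes through the full residue). Everything else is either a direct appeal to \cite{Par:06b} and Corollary~\ref{cor:sphere} or a routine uniform-thickness count; I would lean on the homogeneity argument in Step~3 to avoid a fiddly explicit gallery enumeration.
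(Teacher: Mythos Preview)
Your outline matches the paper's proof closely: both establish the partition via the unique projection $\pi_{\nu,\mu}$ together with the verification that $c(x,y)\in O(x)$ (which the paper justifies, as you anticipate, by placing $o,x,y$ in a common apartment and reading off the local configuration), and both obtain (1) immediately from $|O(x)|=q^{\ell(w_0)}$.

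The divergence is in Step~3, and your preferred ``clean'' route via homogeneity has a genuine gap. An arbitrary affine building with uniform thickness need not admit an automorphism group acting transitively on $S_{\mu}$, let alone on the pairs $(x,c)$ with $c\in O(x)$. The paper itself remarks that in dimensions $1$ and $2$ not all affine buildings arise from groups, and there are exotic $\tilde{A}_2$ buildings with very small automorphism groups. So equinumerosity of atoms cannot be deduced by an orbit argument in the generality of the lemma.

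The paper instead carries out the direct count that you sketch first and then set aside. It observes that for $y\in C(x,c,\la)$ the gallery distance $\mathrm{dist}(c_x,c_y)$ equals the number of hyperplanes of $\Sigma(W,S)$ separating $\fc_0$ from $t_{\la}\fc_0$, which by counting along parallelism classes is $\sum_{\alpha\in\Phi^+}\langle\la,\alpha^{\vee}\rangle=\langle\la,2\rho\rangle$; hence $\mathrm{dist}(c,c_y)=\langle\la,2\rho\rangle-\ell(w_0)$, and the standard reduced-gallery count (each step contributing a factor of $q$, not $q-1$) then gives $|C(x,c,\la)|=q^{\langle\la,2\rho\rangle-\ell(w_0)}$ uniformly in $\mu,x,c$. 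This is precisely your first attempt at Step~3 made rigorous; the hyperplane-counting identity is the ingredient that replaces your undefined ``$\ell(\la)$''. With (2) established directly, nonemptiness of atoms (your concern in Step~2) is automatic.
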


\begin{proof}
If $y\in S_{\nu}$ and $x\in S_{\mu}$ then $y\in C(x,\la)$ if and only if $x=\pi_{\nu,\mu}(y)$. Thus we have a disjoint union $S_{\nu}=\bigcup_{x\in S_{\mu}}C(x,\la)$. Moreover, if $y\in C(x,\la)$ with $x\in S_{\mu}$ then $c(x,y)\in O(x)$ (to see this, note that since $x=\pi_{\nu,\mu}(y)$ the three vertices $o,x,y$ lie in a common apartment, and hence are configured as in Figure~\ref{fig:atomsbuilding}, and in this figure $c(x,y)=c$). Then $y\in C(x,c,\la)$ if and only if $c(x,y)=c$, giving the disjoint union $C(x,\la)=\bigcup_{c\in O(x)}C(x,c,\la)$. 

Since $|O(x)|=q^{\ell(w_0)}$ for all $x\in S_{\mu}$, it is then clear that $|\cA_{\nu,\la}|=|S_{\mu}|q^{\ell(w_0)}$, and hence (1). To prove (2) we note that if $y\in C(x,c,\la)$ then $\mathrm{dist}(c_x,c_y)$ is equal to the number of hyperplanes of~$\Sigma(W,S)$ separating the chambers $\fc_0$ and $t_{\la}\fc_0$. Counting these hyperplanes by parallelism classes gives
$$
\mathrm{dist}(c_x,c_y)=\sum_{\alpha\in \Phi^+}\langle \la,\alpha^{\vee}\rangle=\langle\la,2\rho\rangle,
$$
where we use the fact that $2\rho=\sum_{\alpha\in\Phi^+}\alpha^{\vee}$ (see \cite[\S VI.10]{bourbaki}). Thus $\mathrm{dist}(c,c_y)=\mathrm{dist}(c_x,c_y)-\mathrm{dist}(c_x,c)=\langle\la,2\rho\rangle-\ell(w_0)$, and hence the result (see Figure~\ref{fig:atomsbuilding} for illustration).
\end{proof}

Let $\la,\mu\in P^{++}$ and write $\nu=\la+\mu$. Let $\cF_{\nu,\la}$ denote the $\sigma$-algebra generated by $\cA_{\nu,\la}$. The members of the set $\cA_{\nu,\la}$ are called the \textit{atoms} of $\cF_{\nu,\la}$.

%
%
%
%

\newpage

\subsection{Proof of Theorem~\ref{thm:main1buildings}}\label{subsec:6a}

\begin{lemma}\label{lem:doubledistance}
Let $z\in V_P$, and suppose that $x_1,x_2\in V_P$ satisfy (for $j=1,2$):
\begin{compactenum}[$(1)$]
\item $\vect{d}(z,x_j)\in P^{++}$, 
\item $\vect{d}(o,x_j)=\vect{d}(o,z)+\vect{d}(z,x_j)$,
\item $\mathrm{dist}(c_1,c_2)=\ell(w_0)$ where $c_j=c(z,x_j)$.
\end{compactenum}
Then $\vect{d}(x_1,x_2)=\vect{d}(x_1,z)+\vect{d}(z,x_2)$. 
\end{lemma}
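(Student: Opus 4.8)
The plan is to reduce the statement to a computation inside a single apartment, exploiting that the hypotheses place $x_1, z, x_2$ in a "geodesic-like" configuration analogous to the tree case, where $z$ lies on a minimal gallery between $x_1$ and $x_2$. First I would invoke the building axioms: by (B2) and the residue structure at $z$, the chambers $c_1 = c(z,x_1)$ and $c_2 = c(z,x_2)$ both contain $z$, and hypothesis (3) says they are opposite in the residue of $z$ (distance $\ell(w_0)$). The key geometric claim I want is that $z$ lies on a minimal gallery from (a chamber containing) $x_1$ to (a chamber containing) $x_2$, which should follow from the fact that any minimal gallery from $x_1$ to $z$ ends at $c_1$ (reversing the defining property of $c(z,x_1)$, so it ends at the chamber of $O(\dots)$-type opposite), any minimal gallery from $z$ to $x_2$ starts at $c_2$, and $c_1, c_2$ being opposite in the residue of $z$ means the concatenation is still minimal. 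Then by (B2) I can find an apartment $A$ containing a minimal gallery from $x_1$ to $x_2$ passing through $z$; since apartments are convex (contain minimal galleries between any two of their chambers), $A$ contains $x_1$, $z$, $x_2$.

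Next, working inside $A \cong \Sigma(W,S)$ via a type-preserving isomorphism $\psi$, write $\psi(z) = 0$ (translating), $\psi(x_1) = \mu_1$, $\psi(x_2) = \mu_2$. Hypothesis (1) gives $\vect{d}(z,x_1) = \mu_1^+ \in P^{++}$ and similarly for $\mu_2$; I then want to normalise so that $\mu_1$ itself is dominant, i.e. choose $\psi$ so that $\psi(x_1) \in P^{++}$. Hypothesis (3) — that $c_1$ and $c_2$ are opposite in the residue of $z$ — translates, in the apartment, to the statement that $x_2$ lies in the direction of $w_0$ applied to the $x_1$-direction; concretely, $\psi(x_2) = w_0 \nu$ for some $\nu \in P^{++}$, so that $\vect{d}(z,x_2) = \nu$ and $\psi(x_2) = w_0\nu = -\nu^* $. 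The vector from $x_1$ to $x_2$ in $A$ is then $w_0\nu - \mu_1$, and since $\mu_1 \in P^{++}$ and $-w_0\nu \in P^{++}$, this vector $w_0\nu - \mu_1$ lies in the dominant-ish cone only after applying $w_0$: one checks $w_0(w_0\nu - \mu_1) = \nu - w_0\mu_1 = \nu + \mu_1^*$, and I claim $\vect{d}(x_1,x_2) = (w_0\nu - \mu_1)^+$. The arithmetic to nail down is that $(w_0\nu - \mu_1)^+ = \nu^* + \mu_1 = \vect{d}(z,x_2) + \vect{d}(x_1,z)$ using $\vect{d}(x_1,z) = \vect{d}(z,x_1)^* = \mu_1^*$... — here I must be careful with $*$'s, but the upshot is $\vect{d}(x_1,x_2) = \vect{d}(x_1,z) + \vect{d}(z,x_2)$ as claimed, with all the relevant vectors being strongly dominant so that no $W_0$-folding occurs.

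The main obstacle I anticipate is establishing rigorously the geometric claim that hypothesis (3) forces the three points into a common apartment with $z$ "between" $x_1$ and $x_2$ — i.e. that $\operatorname{dist}(c_1,c_2) = \ell(w_0)$ is exactly the condition that makes the concatenation of a minimal gallery $x_1 \to z$ with one $z \to x_2$ minimal. This is where the structure of $c(z,x)$ (every minimal gallery from $x$ to $z$ passes through $c(z,x)$ as its penultimate-type chamber) and the residue combinatorics ($O(\cdot)$ being the opposite chambers, distance $\ell(w_0)$) must be combined with the gate property of residues / the fact that $\Delta$ is a building, to conclude that there is no "shortcut." I expect a clean way to phrase this via the $W$-valued distance $\delta$: show $\delta(c_1', c_2') = \delta(c_1', c_1)\,\delta(c_1, c_2)\,\delta(c_2, c_2')$ with lengths adding, where $c_1'$ contains $x_1$ on the far side and $c_2'$ contains $x_2$, using that $\delta(c_1,c_2) = w_0$ and standard reduced-word facts in $W$. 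Once the common apartment is secured, the remainder is the bookkeeping with $w_0$, $*$, and $P^{++}$ sketched above, which is routine given Corollary~\ref{cor:sphere} and the results of \cite{Par:06a,Par:06b} already cited.
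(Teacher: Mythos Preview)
Your proposal is correct and follows essentially the same route as the paper. Both arguments hinge on the observation that the opposition condition $\mathrm{dist}(c_1,c_2)=\ell(w_0)$ in the residue of $z$ forces the concatenation of a minimal gallery from $c_1'=c(x_1,z)$ to $c_1$, then from $c_1$ to $c_2$, then from $c_2$ to $c_2'=c(x_2,z)$ to be minimal, so that $x_1,z,x_2$ lie in a common apartment; the paper phrases this via convex hulls and then simply asserts the conclusion is ``clear'', whereas you carry out the explicit $W_0$-bookkeeping inside the apartment (normalising $\psi(z)=0$, $\psi(x_1)\in P^{++}$, $\psi(x_2)\in w_0P^{++}$, and reading off $(w_0\nu-\mu_1)^+=\nu+\mu_1^*$). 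Your identification of the gallery-concatenation step as the crux, to be handled via lengths adding in the $W$-valued metric $\delta$, is exactly right and matches the paper's appeal to the affine geometry of the Coxeter complex.
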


\begin{proof}
We will only sketch the proof. For $j=1,2$, the \textit{convex hull} $\mathrm{conv}\{z,x_j\}$ is intersection of all apartments containing both $z$ and $x_j$. Equivalently, since $\vect{d}(z,x_j)\in P^{++}$, the convex hull $\mathrm{conv}\{z,x_j\}$ is the union of all chambers of $\Delta$ lying on a minimal length gallery from $c_j=c(z,x_j)$ to $c_j'=c(x_j,z)$. These convex hulls are shaded in Figure~\ref{fig:foldback}.

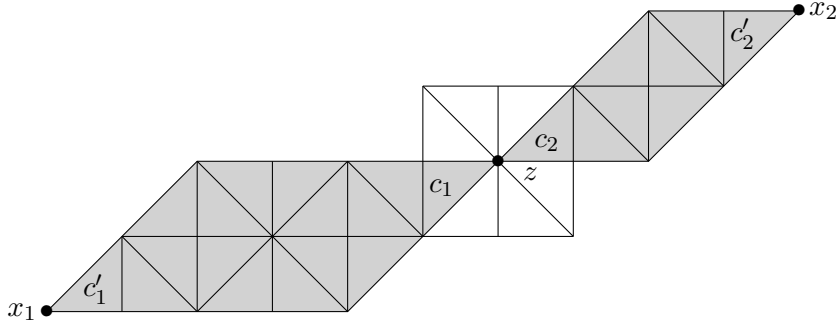
\begin{figure}[H]
\centering
\begin{tikzpicture}[scale=1]
\path [fill=lightgray!70] (0,0)--(-2,0)--(-4,-2)--(-2,-2)--(0,0);
\path [fill=lightgray!70] (-4,-2)--(-8,-2)--(-10,-4)--(-6,-4)--(-4,-2);
\draw (0,0)--(-2,0)--(-4,-2)--(-2,-2)--(0,0);
\draw (-4,-2)--(-8,-2)--(-10,-4)--(-6,-4)--(-4,-2);
\draw (-1,-1)--(-5,-1);
\draw (-3,-3)--(-9,-3);
\draw (-1,0)--(-1,-1);
\draw (-2,0)--(-2,-2);
\draw (-3,-1)--(-3,-3);
\draw (-4,-1)--(-4,-3);
\draw (-5,-1)--(-5,-3);
\draw (-6,-2)--(-6,-4);
\draw (-7,-2)--(-7,-4);
\draw (-8,-2)--(-8,-4);
\draw (-9,-3)--(-9,-4);
\draw (-2,0)--(-1,-1);
\draw (-3,-1)--(-2,-2);
\draw (-5,-1)--(-3,-3);
\draw (-6,-2)--(-5,-3);
\draw (-8,-2)--(-6,-4);
\draw (-9,-3)--(-8,-4);
\draw (-6,-2)--(-8,-4);
\node at (-4.75,-2.35) {$c_1$};
\node at (-3.35,-1.75) {$c_2$};
\node at (-9.35,-3.7) {$c_1'$};
\node at (0,0) [right] {$x_2$};
\node at (-3.8,-1.95) [below right] {$z$};
\node at (-10,-4) [left] {$x_1$};
\node at (-0.75,-0.3) {$c_2'$};
\node at (0,0) {$\bullet$};
\node at (-4,-2) {$\bullet$};
\node at (-10,-4) {$\bullet$};
\end{tikzpicture}
\caption{Illustration for Lemma~\ref{lem:doubledistance}}\label{fig:foldback}
\end{figure}

Since $\mathrm{dist}(c_1,c_2)=\ell(w_0)$ the affine geometry of the Coxeter complex (c.f. \cite[\S11.5]{AB:08}) implies that if $\gamma_1$ is a minimal length gallery joining $c_1'$ to $c_1$, and $\gamma_2$ is a minimal length gallery joining $c_1$ to $c_2$, and $\gamma_3$ is a minimal length gallery joining $c_2$ to $c_2'$, then the concatenation $\gamma=\gamma_1\cdot\gamma_2\cdot\gamma_3$ is a minimal length gallery joining $c_1'$ to $c_2'$ (see Figure~\ref{fig:foldback}). It follows that all chambers shown in Figure~\ref{fig:foldback} lie in the convex hull of $x_1$ and $x_2$, and hence they lie in a common apartment. It is then clear that $\vect{d}(x_1,x_2)=\vect{d}(x_1,z)+\vect{d}(z,x_2)$. 
%
%
%
%
%
\end{proof}

For $z\in V_P$, let $\Delta(z)$ denote the set of all chambers $c$ of $\Delta$ with $z\in c$. Note that the maximum numerical distance between chambers $c,c'\in \Delta(z)$ is $\ell(w_0)$ (see Figure~\ref{fig:foldback}). 

\begin{lemma}\label{lem:noise}
Let $x\in V_P$. Suppose that $c_1,c_2\in \Delta(x)$ with $\mathrm{dist}(c_1,c_2)=\ell(w_0)$. Then
$$
|\{c\in \Delta(x)\mid \mathrm{dist}(c_1,c)=\mathrm{dist}(c_2,c)=\ell(w_0)\}|\geq \kappa q^{\ell(w_0)},\quad\text{where $\kappa=(1-q^{-1})^{\ell(w_0)}$}.
$$ 
\end{lemma}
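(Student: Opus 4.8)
The plan is to pass to the \emph{residue} $B:=\Delta(x)$ of the special vertex $x$, i.e.\ the sub-building consisting of all simplices of $\Delta$ containing $x$. Since the stabiliser of $x$ in $W$ is $W_0$, $B$ is a (thick) spherical building of type $(W_0,S_0)$; its chambers are exactly the chambers of $\Delta$ containing $x$, each panel of $B$ lies in $q+1$ chambers of $B$ (inherited from the thickness of $\Delta$), and a minimal gallery in $\Delta$ between two chambers of $B$ may be taken inside $B$. Thus the $W$-valued distance on $B$ takes values in $W_0$, and for $c\in\Delta(x)$ the condition $\dist(c_1,c)=\dist(c_2,c)=\ell(w_0)$ is equivalent to $\delta(c_1,c)=\delta(c_2,c)=w_0$ (recall $w_0$ is the unique element of $W_0$ of maximal length), while the hypothesis says $\delta(c_1,c_2)=w_0$. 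So it suffices to produce at least $(q-1)^{\ell(w_0)}=\kappa\,q^{\ell(w_0)}$ chambers $c$ of $B$ with $\delta(c_1,c)=\delta(c_2,c)=w_0$.

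Fix a reduced word $w_0=s_{i_1}\cdots s_{i_N}$ with $N=\ell(w_0)$. Every chamber $c$ with $\delta(c_1,c)=w_0$ is the endpoint of a \emph{unique} gallery of type $(i_1,\ldots,i_N)$ starting at $c_1$, and conversely every such gallery is minimal and ends opposite $c_1$. I will build many galleries
$$
c_1=d_0\sim_{i_1}d_1\sim_{i_2}\cdots\sim_{i_N}d_N
$$
keeping, at each stage $j$, both invariants $\delta(c_1,d_j)=s_{i_1}\cdots s_{i_j}$ (so the gallery stays minimal) and $\delta(c_2,d_j)=w_0$; the base case $j=0$ holds since $\delta(c_2,d_0)=\delta(c_2,c_1)=w_0$. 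For the inductive step, let $\pi$ be the cotype-$i_j$ panel of $d_{j-1}$, a set of $q+1$ chambers of $B$. Because $\ell(s_{i_1}\cdots s_{i_j})=\ell(s_{i_1}\cdots s_{i_{j-1}})+1$, the chamber $d_{j-1}$ is the gate (projection) of $c_1$ onto $\pi$, so the remaining $q$ chambers $e\in\pi$ all satisfy $\delta(c_1,e)=s_{i_1}\cdots s_{i_j}$. Since $w_0$ is longest, $\ell(w_0 s_{i_j})=\ell(w_0)-1$, so $d_{j-1}$ is \emph{not} the gate of $c_2$ onto $\pi$; hence exactly one chamber $g\in\pi$, necessarily distinct from $d_{j-1}$, has $\delta(c_2,g)=w_0 s_{i_j}$, while the other $q$ chambers of $\pi$ (among them $d_{j-1}$) have $\delta(c_2,\cdot)=w_0$. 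Choosing $d_j$ to be any of the $q-1$ chambers of $\pi$ distinct from both $d_{j-1}$ and $g$ preserves both invariants.

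Each run of this construction makes $q-1$ independent choices over the $N$ steps, producing $(q-1)^N$ distinct galleries of type $(i_1,\ldots,i_N)$ issuing from $c_1$; by the uniqueness statement above, distinct galleries have distinct endpoints, and every endpoint $c=d_N$ satisfies $\delta(c_1,c)=s_{i_1}\cdots s_{i_N}=w_0$ and $\delta(c_2,c)=w_0$. Hence the set in question has size at least $(q-1)^N=(1-q^{-1})^{\ell(w_0)}q^{\ell(w_0)}=\kappa\,q^{\ell(w_0)}$, as claimed. The only delicate point is the identification of the two gates of $\pi$: that $d_{j-1}$ is the $c_1$-gate is forced by minimality of the gallery built so far, while $d_{j-1}$ fails to be the $c_2$-gate precisely because every simple reflection is a right descent of the longest element $w_0$. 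Granting these, the count of $q-1$ admissible choices at each step, and the independence of the choices across steps, are immediate.
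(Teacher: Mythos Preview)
Your proof is correct and follows essentially the same approach as the paper's: both pass to the spherical residue $\Delta(x)$, fix a reduced expression $w_0=s_{i_1}\cdots s_{i_N}$, and build $(q-1)^N$ minimal galleries from $c_1$ by avoiding at each step the unique $i_j$-neighbour on which the distance to $c_2$ drops. Your use of gate/projection language is exactly what the paper encodes via its $p_i(c,c')$ notation, and your justification that distinct galleries have distinct endpoints (uniqueness of a minimal gallery of fixed reduced type) is the same fact the paper invokes by citing \cite[Proposition~2.1]{Par:06a}.
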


%

\begin{proof}
Without loss of generality we may assume that $\tau(x)=0$. Then the fact that $\mathrm{dist}(c_1,c_2)=\ell(w_0)$ implies that $\delta(c_1,c_2)=w_0$ is the unique longest element of $W_0$. In fact $\Delta(x)$ is a spherical building of type $(W_0,S_0)$, and thus the following property holds (see \cite[\S5.3 and \S5.7]{AB:08}): If $c,c'\in\Delta(x)$ with $\mathrm{dist}(c,c')=\ell(w_0)$, then for each $1\leq i\leq n$ there is a unique chamber $p_i(c,c')\in \Delta(x)$ with $c\sim_i p_i(c,c')$ and $\mathrm{dist}(c',p_i(c,c'))=\ell(w_0)-1$. For all other chambers $d\in\Delta(x)$ with $c\sim_i d$ we have $\mathrm{dist}(c',d)=\ell(w_0)$.  

Let $w_0=s_{i_1}\cdots s_{i_N}$ be a reduced expression. Using the above property, there are $q-1$ chambers $d_1\in\Delta(x)$ with $d_1\sim_{i_1}c_1$ and $d_1\neq p_{i_1}(c_1,c_2)$, and all of these chambers satisfy $\mathrm{dist}(c_2,d_1)=\ell(w_0)$. For each of these chambers $d_1$ there are $q-1$ chambers $d_2$ with $d_2\sim_{i_2}d_1$ and $d_2\neq p_{i_2}(d_1,c_2)$, and all of these chambers satisfy $\mathrm{dist}(c_2,d_2)=\ell(w_0)$. Continuing in this way we construct $(q-1)^{\ell(w_0)}$ distinct galleries
$$
c_1\sim_{i_1}d_1\sim_{i_2}d_2\sim_{i_3}\cdots\sim_{i_N}d_N
$$
with $\mathrm{dist}(c_2,d_j)=\ell(w_0)$ for $1\leq j\leq N$. The end chambers $d_N$ of these galleries are all distinct (this follows, for example, from \cite[Proposition~2.1]{Par:06a}), and moreover $\mathrm{dist}(c_1,d_N)=\ell(w_0)$ by construction. Hence the result.
\end{proof}

We now provide analogues of Lemmas~\ref{lem:1} and~\ref{lem:2}. Let $\kappa=(1-q^{-1})^{\ell(w_0)}$, as in Lemma~\ref{lem:noise}.

\begin{lemma}\label{lem:1buildings}
Suppose that $\Delta$ has $(-1)$-type. Let $\E\subseteq V_P$, $k>0$, $\vect{r}\in\mathbb{N}^k$, and $\vect{\la}\in\mathbb{M}^k$. Let $r=\max\{r_i\mid 1\leq i\leq k\}$. If $\E$ contains no balanced $(k,\vect{\la},\vect{r})$-star then for each $v\in S_{\nu-\la}$ with $\nu\geq \la\gg \la_k$ the proportion of the atoms of $\cF_{\nu,\la_1}$ contained in $C(v,\la)$ with the property that they intersect $\E$ in at least $r$ vertices is at most~$1-\kappa$.
\end{lemma}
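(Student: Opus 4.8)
The argument follows that of Lemma~\ref{lem:1} closely, with the rank-one ``children'' decomposition $C(x,s)=\bigsqcup_{y\in C(x,1)}C(y,s-1)$ replaced by the decomposition $C(x,\la)=\bigsqcup_{c\in O(x)}C(x,c,\la)$ supplied by Lemma~\ref{lem:atoms}, and the trivial rank-one geometry replaced by Lemmas~\ref{lem:doubledistance} and~\ref{lem:noise}. For $x\in V_P$ with $\vect{d}(o,x)=\nu-\la_j$ I set
$$
G_j(x)=\{c\in O(x)\mid |\E\cap C(x,c,\la_j)|\geq r\},
$$
and call $x$ of \emph{type $A$ for $\la_j$} if $|G_j(x)|>(1-\kappa)q^{\ell(w_0)}$, and of \emph{type $B$} otherwise. (For $\Delta=T_q$ one has $\ell(w_0)=1$, $\kappa=1-q^{-1}$ and $(1-\kappa)q^{\ell(w_0)}=1$, so this is exactly the ``at least two good children'' dichotomy of Lemma~\ref{lem:1}.)

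I would then record two local facts. \emph{(i)} If $x$ is of type $B$ for $\la_j$, then the proportion of atoms of $\cF_{\nu,\la_1}$ contained in $C(x,\la_j)$ that meet $\E$ in at least $r$ vertices is at most $1-\kappa$. This is pure counting: by Lemma~\ref{lem:atoms}(2) each $C(x,c,\la_j)$ contains the same number $q^{\langle\la_j-\la_1,2\rho\rangle}$ of $\cF_{\nu,\la_1}$-atoms, and any such atom meeting $\E$ in $\geq r$ vertices is contained in some $C(x,c,\la_j)$ with $c\in G_j(x)$; hence the proportion is at most $|G_j(x)|/q^{\ell(w_0)}\leq 1-\kappa$. \emph{(ii)} If $x$ is of type $A$ for $\la_j$, then for \emph{every} $c_0\in O(x)$ there is $c\in G_j(x)$ with $\mathrm{dist}(c_0,c)=\ell(w_0)$: indeed $\mathrm{dist}(c_x,c_0)=\ell(w_0)$, so Lemma~\ref{lem:noise} applied to the pair $c_0,c_x$ yields at least $\kappa q^{\ell(w_0)}$ chambers of $O(x)$ opposite $c_0$, while $|O(x)\setminus G_j(x)|<\kappa q^{\ell(w_0)}$, so one of them lies in $G_j(x)$.

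Given $v\in S_{\nu-\la}$ with $\nu\geq\la\gg\la_k$, I next consider the forest whose root nodes are the vertices $x_k\in C(v,\la-\la_k)$ and in which the children of $x_{j+1}$ are the vertices $x_j\in C(x_{j+1},\la_{j+1}-\la_j)$, and show that no root-to-leaf chain $x_k,x_{k-1},\dots,x_1$ can have every $x_j$ of type $A$ for $\la_j$. If it did, I would use (ii) at $x_1$ (with $|G_1(x_1)|\geq 1$) to choose opposite chambers $c,c'\in G_1(x_1)$, a centre $v_0\in\E\cap C(x_1,c,\la_1)$ and $r$ distinct vertices $v_1^1,\dots,v_1^r\in\E\cap C(x_1,c',\la_1)$; and at each $x_j$ with $j\geq2$ I would apply (ii) with $c_0=c(x_j,x_{j-1})$ (which equals $c(x_j,v_0)$ since $v_0$ is a descendant of $x_{j-1}$) to obtain $c_j'\in G_j(x_j)$ opposite $c(x_j,v_0)$ and $r$ distinct vertices $v_j^1,\dots,v_j^r\in\E\cap C(x_j,c_j',\la_j)$. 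Lemma~\ref{lem:doubledistance} applied with $z=x_j$ then gives $\vect{d}(v_0,v_j^l)=\vect{d}(v_0,x_j)+\vect{d}(x_j,v_j^l)=\la_j^*+\la_j=2\la_j$, the last equality by the $(-1)$-type hypothesis; since all the $v$'s lie in $S_\nu$, this is a balanced $(k,\vect{\la},r\vect{1})$-star, hence contains a balanced $(k,\vect{\la},\vect{r})$-star, contradicting the hypothesis. Therefore every chain contains a type-$B$ vertex, and a depth-first scan of the forest --- descend each branch to the first type-$B$ vertex $x_j$ encountered and declare $C(x_j,\la_j)$ a block --- partitions the $\cF_{\nu,\la_1}$-atoms contained in $C(v,\la)$ (the chosen vertices form an antichain in the forest), whence by (i) the proportion of such atoms meeting $\E$ in $\geq r$ vertices is at most $1-\kappa$ on each block, and so at most $1-\kappa$ on all of $C(v,\la)$.

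The main obstacle, I expect, is pinning down the right notion of ``type $A$'' and proving (ii): one needs Lemma~\ref{lem:noise} to see that a set of forward directions at $x$ of relative size exceeding $1-\kappa$ must contain a direction opposite to an \emph{arbitrary} prescribed one, and it is exactly this quantitative loss that degrades the bound from $q^{-1}$ (the tree case) to $1-\kappa$. The secondary care is in the star construction, where one must check via Lemma~\ref{lem:doubledistance} and the $(-1)$-type assumption that the configuration produced is genuinely \emph{balanced}; the depth-first scan itself is then formally the same as in the tree case.
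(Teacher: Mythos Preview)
Your argument is correct and follows essentially the same approach as the paper's own proof: the same type~$A$/type~$B$ dichotomy (with the same threshold $(1-\kappa)q^{\ell(w_0)}$), the same pigeonhole using Lemma~\ref{lem:noise} to find a good chamber opposite a prescribed one, the same construction of the $(k,\vect{\la},r\vect{1})$-star via Lemma~\ref{lem:doubledistance} and the $(-1)$-type hypothesis, and the same depth-first partition into blocks. If anything, your formulation of fact~(ii)---that type~$A$ guarantees a good chamber opposite \emph{any} prescribed $c_0\in O(x)$, obtained by applying Lemma~\ref{lem:noise} to the pair $c_0,c_x$---is a slightly cleaner way to package what the paper does in two separate steps (first at $x_1$ with a freely chosen $c_0$, then at $x_j$ for $j\geq2$ with $c_0=c(x_j,x_1)$).
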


\begin{proof}
We introduce the following terminology for the proof. A vertex $x\in S_{\nu-\eta}$ (with $0\leq \eta\ll\nu$) is said to have ``type $A$'' if there are distinct chambers $c_0,c_1\ldots,c_{\ell}\in O(x)$, where $\ell=(1-\kappa) q^{\ell(w_0)}$, with the property that $\E\cap C(x,c_j,\eta)$ contains at least $r$ elements for $j=1,\ldots,\ell$, and is said to have ``type $B$'' otherwise. The key observation is that if $x$ has type $A$ then there exist distinct vertices $v_0,v_1^1,\ldots,v_1^r\in S_{\nu}$ with $\vect{d}(v_0,v_1^j)=2\eta$ for $j=1,\ldots,r$. To see this, choose $v_0\in \E\cap C(x,c_{0},\eta)$. By Lemma~\ref{lem:noise} there is at least one index $1\leq j\leq \ell$ such that $\delta(c_0,c_j)=w_0$. Then by Lemma~\ref{lem:doubledistance}, using the fact that $\Delta$ has $(-1)$-type, we have $\vect{d}(v_0,z)=\vect{d}(v_0,x)+\vect{d}(x,z)=\eta^*+\eta=2\eta$ for all $z\in C(x,c_j,\eta)$, and so any choice of $v_1^1,\ldots,v_1^r\in \E\cap C(x,c_j,\eta)$ will work.

%

Let $v\in S_{\nu-\la}$. Let $\mathcal{X}$ be the set of all sequences $(x_k,x_{k-1},\ldots,x_1)$ such that $x_k\in S_{\nu-\la_k}\cap C(v)$ and $x_{k-j}\in C(x_{k-j+1})\cap S_{\nu-\la_{k-j}}$ for $j=1,2,\ldots,k-1$. Suppose that there exists a sequence $(x_k,x_{k-1},\ldots,x_1)\in\mathcal{X}$ such that each $x_j$ has type~$A$. Thus, as noted above, there are chambers $c_1,c_1'\in O(x_1)$, and vertices $v_0\in \E\cap C(x_1,c_1,\la_1)$ and $v_1^1,\ldots,v_1^r\in \E\cap C(x_1,c_1',\la_1)$ with $\vect{d}(v_0,v_1^i)=2\la_1$ for all $1\leq i\leq r$. 

For each $j=2,\ldots,k$ let $c_j\in O(x_j)$ be the chamber $c_j=c(x_1,x_j)$. Since $x_j$ has type~$A$, the argument above shows that there is a chamber $c_j'\neq c_j$ in $O(x_j)$ and distinct vertices $v_j^1,\ldots,v_j^r\in \E\cap C(x_j,c_j',\la_j)$ with $d(v_0,v_j^i)=2\la_j$ for each $1\leq i\leq r$ and $1\leq j\leq k$, and so $\{v_0\}\cup\{v_j^i\mid 1\leq i\leq r,\,1\leq j\leq k\}$ forms a balanced $(k,\vect{\la},r\vect{1})$-star, where $\vect{1}\in\mathbb{N}^k$ is the vector with every entry equal to~$1$. In particular $\E$ contains a balanced $(k,\vect{\la},\vect{r})$-star, a contradiction.

Thus every $(x_k,x_{k-1},\ldots,x_1)\in\mathcal{X}$ contains at least one vertex of type~$B$. Note that if $x_j$ has type~$B$ then the proportion of $\cF_{\nu,\la_1}$ atoms of $C(x_j,\la_j)$ intersecting $\E$ in at least $r$ vertices is at most~$1-\kappa$. Thus we can partition the set of $\cF_{\nu,\la_1}$ atoms in $C(v,\la)$ in such a way that in each part of the partition the proportion of atoms with the property that they intersect $\E$ in at least $r$ vertices is at most~$1-\kappa$. Thus the proportion of all $\cF_{\nu,\la_1}$ atoms of $C(v,\la)$ with this property is at most~$1-\kappa$, and hence the result.
\end{proof}

\begin{lemma}\label{lem:2buildings}
Suppose that $\Delta$ has $(-1)$-type. Let $\E\subseteq V_P$, $k>0$, and $\vect{r}\in\mathbb{N}^k$. For $1\leq j\leq \ell$ let $\vect{\la}_j=(\la_{i,j})_{i=1}^k\in \mathbb{M}^k$, and suppose that $\la_{k,j}\ll\la_{1,j+1}$ for each $j=1,\ldots,\ell-1$. If $\E$ contains no balanced $(k,\vect{\la}_j,\vect{r})$-stars for each  $1\leq j\leq \ell$ then for all $\nu\gg\la_{k,\ell}$ we have
$$
\frac{|\E\cap S_{\nu}|}{|S_{\nu}|}<(1-\kappa)^{\ell}+rq^{\ell(w_0)-\langle\la_{1,1},2\rho\rangle},
$$
where $r=\max\{r_i\mid 1\leq i\leq k\}$. 
\end{lemma}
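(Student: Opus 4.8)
The plan is to mimic the proof of Lemma~\ref{lem:2} for trees: iterate Lemma~\ref{lem:1buildings} once for each of the scales $\vect{\la}_1,\dots,\vect{\la}_\ell$, thereby bounding the proportion of atoms of the finest algebra $\cF_{\nu,\la_{1,1}}$ that meet $\E$ in at least $r$ vertices, and then convert this into the density bound by counting atoms via Lemma~\ref{lem:atoms} and Corollary~\ref{cor:sphere}.

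First, applying Lemma~\ref{lem:1buildings} with $v=o$ and the tuple $\vect{\la}_\ell$ (valid since $\nu\gg\la_{k,\ell}$) shows that at most a $(1-\kappa)$ proportion of the atoms of $\cF_{\nu,\la_{1,\ell}}$, which partition $S_\nu$, meet $\E$ in at least $r$ vertices. Suppose inductively that at most a $(1-\kappa)^{\ell-m}$ proportion of the atoms of $\cF_{\nu,\la_{1,m+1}}$ are ``heavy'' in this sense. Since $\la_{1,m}\le\la_{1,m+1}$, the partition $\cF_{\nu,\la_{1,m}}$ refines $\cF_{\nu,\la_{1,m+1}}$, so every heavy $\cF_{\nu,\la_{1,m}}$-atom lies inside a heavy $\cF_{\nu,\la_{1,m+1}}$-atom. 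Fix such an atom $C(x,c,\la_{1,m+1})$ with $x\in S_{\nu-\la_{1,m+1}}$ and $c\in O(x)$. One checks, using $\la_{1,m+1}-\la_{1,m}\in P^{++}$ and the fact that $c(x,z)=c$ for every descendant $z\in C(x,c,\cdot)$, that the $\cF_{\nu,\la_{1,m}}$-atoms contained in $C(x,c,\la_{1,m+1})$ are precisely the sets $C(y,c',\la_{1,m})$ with $y\in C(x,c,\la_{1,m+1}-\la_{1,m})$ and $c'\in O(y)$. Hence I can run the proof of Lemma~\ref{lem:1buildings} with $v=x$, with ``$\la$'' $=\la_{1,m+1}$ (note $\la_{1,m+1}\gg\la_{k,m}$ by hypothesis and $\nu\ge\la_{1,m+1}$), and with the tuple $\vect{\la}_m$, but with the forest $\mathcal{X}$ restricted to the descendants of $x$ lying in the branch $C(x,c,\cdot)$; that proof is insensitive to this restriction, so it gives that at most a $(1-\kappa)$ proportion of the $\cF_{\nu,\la_{1,m}}$-atoms inside $C(x,c,\la_{1,m+1})$ are heavy. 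Combining, at most a $(1-\kappa)^{\ell-m+1}$ proportion of the atoms of $\cF_{\nu,\la_{1,m}}$ are heavy; after $\ell$ steps, at most a $(1-\kappa)^{\ell}$ proportion of the atoms of $\cF_{\nu,\la_{1,1}}$ are heavy.

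To finish, $\nu-\la_{1,1}\in P^{++}$, so by Lemma~\ref{lem:atoms} there are $|S_{\nu-\la_{1,1}}|q^{\ell(w_0)}$ atoms of $\cF_{\nu,\la_{1,1}}$, each of cardinality $q^{\langle\la_{1,1},2\rho\rangle-\ell(w_0)}$, and by Corollary~\ref{cor:sphere} the number of atoms equals $q^{\ell(w_0)-\langle\la_{1,1},2\rho\rangle}|S_\nu|$. Bounding $|\E\cap S_\nu|$ by (number of heavy atoms)$\times$(atom size) $+$ (number of light atoms)$\times r$, where each light atom contains fewer than $r$ points of $\E$, yields $|\E\cap S_\nu|<(1-\kappa)^{\ell}|S_\nu|+rq^{\ell(w_0)-\langle\la_{1,1},2\rho\rangle}|S_\nu|$, which is the claim.

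I expect the iteration step to be the only real obstacle. In the tree case a $\cF_{n,t}$-atom $C(x,c,\la)$ is literally a descendant ball $C(y,\la-1)$ (with $y$ a child of $x$), so Lemma~\ref{lem:1} applies to it with no extra work; here a building atom $C(x,c,\la_{1,m+1})$ is a proper $q^{\ell(w_0)}$-to-one refinement of the ball $C(x,\la_{1,m+1})$ and is not of the form $C(v,\la)$. Thus one cannot simply quote Lemma~\ref{lem:1buildings} inside such an atom; one must verify that its proof localises to a single chamber-branch out of $x$, which reduces to the compatibility of the maps $\pi_{\cdot,\cdot}$ and $c(\cdot,\cdot)$ with passage to descendants in a fixed branch, i.e.\ to the description of the sub-atoms of $C(x,c,\la_{1,m+1})$ recorded above. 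Once that is settled, the remaining bookkeeping is identical to that in the proof of Lemma~\ref{lem:2}.
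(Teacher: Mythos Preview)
Your proof is correct and follows the same strategy as the paper's: iterate Lemma~\ref{lem:1buildings} across the $\ell$ scales to bound the proportion of heavy $\cF_{\nu,\la_{1,1}}$-atoms by $(1-\kappa)^\ell$, and then count via Lemma~\ref{lem:atoms} and Corollary~\ref{cor:sphere}. You are in fact more careful than the paper on the one point you flag: the paper applies Lemma~\ref{lem:1buildings} with $v=x_\ell$ and tacitly writes $X_\ell=S_\nu\cap C(x_\ell)$, which is the full ball $C(x_\ell,\la_{1,\ell})$ (a union of $q^{\ell(w_0)}$ atoms) rather than the single heavy atom $C(x_\ell,c,\la_{1,\ell})$; your localisation of the argument of Lemma~\ref{lem:1buildings} to a fixed chamber-branch $C(x,c,\cdot)$ is the honest way to justify the iteration.
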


\begin{proof}
Lemma~\ref{lem:1buildings} (applied to the case $v=o$) implies that the proportion of $\cF_{\nu,\la_{1,\ell}}$ atoms intersecting $\E$ in at least $r$ vertices is at most $1-\kappa$. Let $X_{\ell}$ be such an atom, and let $x_{\ell}$ be the projection of this atom onto $S_{\nu-\la_{1,\ell}}$ (that is, $X_{\ell}=S_{\nu}\cap C(x_{\ell})$). Lemma~\ref{lem:1buildings}, this time applied to the case $v=x_{\ell}$, implies that the proportion of the $\cF_{\nu,\la_{1,\ell-1}}$ atoms contained in $X_{\ell}$ with the property that they intersect $\E$ in at least $r$ vertices is at most~$1-\kappa$. Hence the proportion of all $\cF_{\nu,\la_{1,\ell-1}}$ atoms with the property that they intersect $\E$ in at least $r$ vertices is at most~$(1-\kappa)^2$. Iterating this process shows that the proportion of all $\cF_{\nu,\la_{1,1}}$ atoms with the property that they intersect $\E$ in at least $r$ vertices is at most~$(1-\kappa)^{\ell}$. Each atom in the remaining $1-(1-\kappa)^{\ell}$ proportion of atoms contains at most $r-1$ elements of $\E$. Since the total number of $\cF_{\nu,\la_{1,1}}$ atoms is $|S_{\nu-\la_{1,1}}|q^{\ell(w_0)}=q^{\ell(w_0)-\langle\la_{1,1},2\rho\rangle}|S_{\nu}|$ (see Corollary~\ref{cor:sphere} and Lemma~\ref{lem:atoms}) we have
\begin{align*}
\frac{|\E\cap S_{\nu}|}{|S_{\nu}|}&\leq(1-\kappa)^{\ell}+(1-(1-\kappa)^{\ell})q^{\ell(w_0)-\langle\la_{1,1},2\rho\rangle}(r-1)<(1-\kappa)^{\ell}+rq^{\ell(w_0)-\langle\la_{1,1},2\rho\rangle},
\end{align*}
hence the result.
\end{proof}

%
%

\begin{proof}[Proof of Theorem~\ref{thm:main1buildings}] The proof follows from Lemmas~\ref{lem:1buildings} and~\ref{lem:2buildings} in exactly the same fashion as the proof of Theorem~\ref{thm:main1}.
\end{proof}

\begin{remark} We note the following easy extensions of Theorem~\ref{thm:main1buildings}.
\begin{compactenum}[$(1)$]
\item We have assumed that our buildings (and trees) have uniform thickness parameter~$q$. Our techniques apply more generally to the case of locally finite thick ``regular'' affine buildings. These buildings have the property that for each $0\leq i\leq n$ the cardinality $q_i=|\{d\in\Delta\mid d\sim_i c\}|$ is independent of $c\in\Delta$, and $2\leq q_i<\infty$ for all $0\leq i\leq n$. In the $\tilde{A}_1$ case these buildings are ``bi-regular'' trees, where the valencies alternate according to the bipartite structure of the tree.
\item In the definition of $\mathbb{M}^k$ in Section~\ref{subsec:5a} we used $\ll$. Less restrictively one could instead use $<$ in this definition. Theorem~\ref{thm:main1buildings} still holds using this less restrictive definition, with a very similar proof. However some technical changes are required in Lemmas~\ref{lem:1buildings} and~\ref{lem:2buildings}, for if $z,z'\in V_P$ with $\vect{d}(z,z')\in P^+\backslash P^{++}$ then the chamber $c(z,z')$ from Figure~\ref{fig:start} is no longer unique. Instead one must argue using the (unique) ``projection'' of $z'$ onto the ``residue'' of~$z$, which in general is a lower dimensional simplex. While this makes the arguments more technical, the essential details are the same.
\end{compactenum}
\end{remark}

\begin{remark}\label{rem:conj}
We have not been able to push our techniques through to the case of non-$(-1)$-type affine buildings (note that $\la^*=\la$ is used in an essential way in Lemma~\ref{lem:1buildings}). Indeed Theorem~\ref{thm:main1buildings} does not hold in its current form for non-$(-1)$-type buildings. For example, in a thick $\tilde{A}_n$ building the set $\E=V_Q$ has $d^*(\E)=1>0$, and we have $\vect{d}(x,y)\in Q\cap P^+$ for all $x,y\in \E=V_Q$. Thus if $\la=\omega_1+N\rho$ then $2\la\notin Q$ (assuming that $n\geq 2$). Thus there are arbitrarily large $\la\in P^+$ such that $2\la\notin\{\vect{d}(x,y)\mid x,y\in\E\}$. 
\end{remark}

In light of Remark~\ref{rem:conj}, we make the following conjecture.

\begin{conj}\label{conj:main}
Let $\Delta$ be an irreducible affine building with uniform thickness $2\leq q<\infty$. Let $\E\subseteq V_P$ with $d^*(\E)>0$. For each $k>0$ there exists $K=K(\E,k)>0$ such that whenever $\la_1,\ldots,\la_k\in Q\cap P^+$ with $\la_k\geq \cdots\geq \la_2\geq \la_1\geq K\rho$ there exists a subset $\{v_0,v_1,\ldots,v_k\}\subseteq \E$ with $\vect{d}(v_0,v_j)=\la_j$ for all $1\leq j\leq k$ and $\vect{d}(o,v_0)=\vect{d}(o,v_1)=\cdots=\vect{d}(o,v_k)$. 
\end{conj}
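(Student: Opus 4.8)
\emph{Overall strategy.} I would keep the architecture of the proof of Theorem~\ref{thm:main1buildings}: set up the projections and atoms of Section~\ref{subsec:4a}, prove a ``local lemma'' to the effect that if $\E$ contains no suitable star then the proportion of atoms meeting $\E$ appropriately drops by a fixed factor $\theta<1$ as one passes to a coarser scale, and then iterate this against $d^*(\E)>0$ exactly as in Lemma~\ref{lem:2buildings} and the proof of Theorem~\ref{thm:main1buildings}. The whole difficulty is concentrated in the local lemma, and specifically in the fact that the configurations must now realise \emph{every} sufficiently large element of $Q\cap P^+$, not merely those of the form $\lambda+\lambda^*$ reached by the opposition mechanism of Lemma~\ref{lem:doubledistance}.

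\emph{A flexible distance mechanism.} The first step is to replace Lemma~\ref{lem:doubledistance} by the following. Let $x\in V_P$ with $\vd(o,x)\in P^{++}$, let $c_0,c_1\in O(x)$ with $\delta(c_0,c_1)=w$ (lying in the residue group $W_x\cong W_0$), let $\lambda\in P^{++}$, and let $v_j\in C(x,c_j,\lambda)$ for $j=0,1$. Then $\vd(o,v_0)=\vd(o,v_1)=\vd(o,x)+\lambda$ --- so $v_0$ and $v_1$ are automatically equidistant from $o$ --- and $\vd(v_0,v_1)=(w\lambda-\lambda)^+$, independently of the choice of representatives in the two atoms. When $w=w_0$ this is Lemma~\ref{lem:doubledistance}; in general I would prove it by choosing an apartment containing $c_0$ and $c_1$ (hence $x$), in which $c_0,c_1,v_0,v_1$ sit in positions $\fc_0, w\fc_0, \lambda, w\lambda$ relative to $x=0$, the ``descendant from $o$'' condition being built into membership of $O(x)$. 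The second step is purely root-theoretic: one checks, type by type for the non-$(-1)$-type families $\tilde A_n$ $(n\ge2)$, $\tilde D_n$ $(n\text{ odd})$, $\tilde E_6$, that $\{(w\lambda-\lambda)^+\mid w\in W_0,\ \lambda\in P^{++}\}$ contains every sufficiently large element of $Q\cap P^+$. Here $w=w_0$ contributes the ``$\lambda+\lambda^*$'' part (already a finite-index subsemigroup), while Coxeter elements $w$ --- for which $1-w$ is invertible on $E$ and $(1-w)P$ is a full-rank sublattice of $Q$ --- fill in the rest; for $\tilde A_2$, say, $w_0$ yields the multiples of $\rho$ and a Coxeter element fills in everything off the line $\ZZ\rho$.

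\emph{The dichotomy.} Given a large target $\mu\in Q\cap P^+$, fix $w=w(\mu)\in W_0$ and $\lambda=\lambda(\mu)\in P^{++}$ with $(w\lambda-\lambda)^+=\mu$. Call a vertex $x\in V_P$ \emph{$\mu$-good} if among the chambers $c\in O(x)$ with $\E\cap C(x,c,\lambda)\neq\emptyset$ there are two, $c_0$ and $c_1$, with $\delta(c_0,c_1)=w$; by the mechanism above such a vertex furnishes $v_0,v_1\in\E$, equidistant from $o$, with $\vd(v_0,v_1)=\mu$. If no vertex at a given scale is $\mu$-good, then there the directions meeting $\E$ form a subset of $O(x)$ with no pair at relative position $w$, so its size is bounded by the \emph{$w$-independence number} of the spherical residue $\Delta(x)$. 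For $w=w_0$ this is precisely Lemma~\ref{lem:noise}; for the shorter elements $w$ that occur one must prove the analogous bound directly. For $\tilde A_2$ and a Coxeter element, unwinding the relative-position condition turns this into a question about sets of flags in $\mathrm{PG}(2,q)$ with a prescribed incidence pattern, and yields a bound of order $q^2$, a vanishing fraction of $|O(x)|=q^3$ --- more than enough for the iteration to force $d^*(\E)=0$, contradicting positive density. This settles the single-distance case.

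\emph{The main obstacles.} I expect two points to be hard. The first is proving the required $w$-independence bounds \emph{uniformly} over all elements $w$ forced by the covering statement and over all non-$(-1)$-type types: if for some unavoidable $w$ the $w$-relation on $O(x)$ is sparse but nevertheless admits a $w$-independent set of positive proportion of $q^{\ell(w_0)}$, the dichotomy produces no density decrement, and one would have to replace $O(x)$ by a coarser system of ``directions'' adapted to $w$ --- for instance residues of lower-dimensional simplices, in the spirit of the closing remark of the paper. The second, and I think more serious, obstacle is the multi-distance case $k>1$: in a star the centre $v_0$ must lie at vector distances $\lambda_1,\dots,\lambda_k$ from $v_1,\dots,v_k$, which forces the ``direction towards $v_0$'' at each higher apex $x_j$, and one then needs a direction meeting $\E$ at relative position $w_j$ \emph{from that prescribed direction}. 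When $w_j\neq w_0$ this relation is sparse, so the prescribed direction and the set of $\E$-meeting directions need not be $w_j$-related, and the inductive step of Lemma~\ref{lem:1buildings} collapses; an extra idea --- some way of re-expressing each elementary move as an opposition, or of decoupling the leaf distances from a single ancestor chain --- is needed here, and is presumably why Conjecture~\ref{conj:main} is left open.
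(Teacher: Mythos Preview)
The statement is Conjecture~\ref{conj:main}, which the paper explicitly leaves \emph{open}: Remark~\ref{rem:conj} says ``we have not been able to push our techniques through to the case of non-$(-1)$-type affine buildings,'' and the conjecture is then stated without proof. There is therefore no argument in the paper to compare your proposal against; you are sketching an attack on an open problem, and you yourself note in your final sentence that the obstacles you list are ``presumably why Conjecture~\ref{conj:main} is left open.''

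Beyond the two obstacles you identify, your ``flexible distance mechanism'' already contains a gap. You assert that for $c_0,c_1\in O(x)$ with $\delta(c_0,c_1)=w$ and \emph{any} $v_j\in C(x,c_j,\lambda)$ one has $\vd(v_0,v_1)=(w\lambda-\lambda)^+$, independent of the choices. This is false for $w\neq w_0$. Lemma~\ref{lem:doubledistance} works precisely because when $\mathrm{dist}(c_1,c_2)=\ell(w_0)$ the concatenation of minimal galleries $c_1'\to c_1\to c_2\to c_2'$ is again minimal, which forces $v_0,x,v_1$ into a common apartment in the required relative positions. For shorter $w$ this concatenation is \emph{not} minimal: the convex hulls $\mathrm{conv}\{x,v_0\}$ and $\mathrm{conv}\{x,v_1\}$ can share chambers beyond the residue of $x$, and $\vd(v_0,v_1)$ then genuinely depends on where the two descendants branch apart, not merely on $w$ and $\lambda$. (Already $w=1$ gives $(w\lambda-\lambda)^+=0$, yet distinct $v_0,v_1$ in the same atom have $\vd(v_0,v_1)\neq 0$.) Your proposed justification, ``choose an apartment containing $c_0$ and $c_1$,'' does not help: such an apartment exists but need not contain $v_0$ or $v_1$. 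Without a correct replacement for Lemma~\ref{lem:doubledistance} valid at non-maximal $w$, the dichotomy in your third paragraph cannot start --- a ``$\mu$-good'' vertex need not produce any pair in $\E$ at vector distance~$\mu$ --- so the density decrement never materialises.
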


\subsection{Application to $p$-adic Lie groups}\label{subsec:6}

We conclude with an application to sets of positive density in $p$-adic Lie groups. Let $\FF$ be a local field with valuation ring $\fo$ and residue field $\FF_q$, and let $G=G(\FF)$ be a Chevalley group with root system $\Phi$. Let $K=G(\fo)$. There is an affine building $\Delta(G,K)$ associated to $(G,K)$ whose set of type~$0$ vertices is $G/K=V_Q\subseteq V_P$ (see \cite{BT:72}). This building has uniform thickness parameter~$q$. 

There are elements $\varpi_{\la}\in G$ such that 
$$
G=\bigsqcup_{\la\in Q\cap P^+}K\varpi_{\la}K
$$  
(roughly speaking, $\varpi_{\la}$ is a diagonal matrix whose entries are powers of the uniformiser~$\pi$). Moreover, the vector distance between vertices $gK$ and $hK$ is $\vect{d}(gK,hK)=\lambda$ if and only if $g^{-1}h\in K\varpi_{\la}K$. For $g\in G$ let $\la(g)=\vect{d}(K,gK)$.

We define upper density $d^*(\E)$ of a subset $\E\subseteq G/K$ as in~(\ref{eq:defdensity}).

\begin{cor}\label{cor:padic}
Let $(G,K)$ be as above, and suppose that $\Delta(G,K)$ has $(-1)$-type. Let $r\in\NN$. Let $\E\subseteq G/K$ with $d^*(\E)>0$. There exists a constant $N>0$ such that for all $g_1,\ldots,g_r$ with $\la(g_j)\in 2P$ and $N\rho\ll\la(g_1)\ll\cdots\ll\la(g_r)$ we have 
$$
\E\cap \E g_1K\cap\cdots\cap \E g_rK\neq\emptyset.
$$
\end{cor}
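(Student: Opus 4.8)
The plan is to deduce Corollary~\ref{cor:padic} directly from Theorem~\ref{thm:main1buildings} by translating the statement into the language of vertices and vector distances in the building $\Delta=\Delta(G,K)$, the only genuinely new input being the dictionary relating double cosets $K\varpi_\la K$ to vector distances.

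First I would fix the dictionary. Identify $\E\subseteq G/K=V_Q\subseteq V_P$ with a set of type~$0$ vertices of $\Delta$, and let $\tilde{\E}\subseteq G$ be its right $K$-invariant preimage, so that the subsets $\E g_jK$ are also right $K$-invariant and correspond to subsets of $G/K$. For $g\in G$ we have $KgK=K\varpi_{\la(g)}K$ with $\la(g)=\vect{d}(K,gK)$, and since $\tilde{\E}=\tilde{\E}K$ this gives $\tilde{\E}gK=\tilde{\E}KgK=\tilde{\E}\cdot(K\varpi_{\la(g)}K)$. Hence for $h\in G$ one has $h\in\tilde{\E}gK$ if and only if $a^{-1}h\in K\varpi_{\la(g)}K$ for some $a\in\tilde{\E}$, that is, if and only if $\vect{d}(aK,hK)=\la(g)$ for some vertex $aK\in\E$. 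Because $\Delta$ has $(-1)$-type, $\vect{d}$ is symmetric, so this is equivalent to $\vect{d}(hK,aK)=\la(g)$ for some $aK\in\E$. Consequently a coset $v_0=hK$ belongs to $\E\cap\E g_1K\cap\cdots\cap\E g_rK$ if and only if $v_0\in\E$ and for each $1\le j\le r$ there is a vertex $v_j\in\E$ with $\vect{d}(v_0,v_j)=\la(g_j)$; equivalently, if and only if $\E$ contains an $(r,\vect{\la},\vect{1})$-star with centre $v_0$, where $\vect{1}=(1,\dots,1)\in\NN^r$ and $\la_j:=\la(g_j)/2$.

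Next I would check the hypotheses needed to apply Theorem~\ref{thm:main1buildings} with $k=r$ and $\vect{r}=\vect{1}$. Since $\la(g_j)\in 2P\cap P^+$, writing $\la(g_j)=2\la_j$ forces $\la_j\in P^+$. For $\mu,\nu\in P$ the pairings $\langle\mu,\alpha_i\rangle$ are integers, so $\langle 2(\nu-\mu),\alpha_i\rangle\ge 1$ holds for all simple $\alpha_i$ exactly when $\langle\nu-\mu,\alpha_i\rangle\ge 1$ does; hence $2\mu\ll 2\nu\iff\mu\ll\nu$ and $0\ll 2\mu\iff 0\ll\mu$. Therefore $N\rho\ll\la(g_1)\ll\cdots\ll\la(g_r)$ gives $\vect{\la}=(\la_1,\dots,\la_r)\in\mathbb{M}^r$ and moreover $\langle\la_1,\alpha_i\rangle\ge(N+1)/2$ for all $i$. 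Taking $N$ larger than $2K_0$, where $K_0=K(\E,r,\vect{1})$ is the constant produced by Theorem~\ref{thm:main1buildings}, forces $\la_1\ge K_0\rho$, so the theorem yields a (balanced) $(r,\vect{\la},\vect{1})$-star inside $\E$. By the dictionary above, any such star witnesses $\E\cap\E g_1K\cap\cdots\cap\E g_rK\ne\emptyset$.

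The only point I expect to require care is the coset bookkeeping in the second paragraph: the reduction $\tilde{\E}gK=\tilde{\E}KgK$ via right $K$-invariance, and invoking the $(-1)$-type hypothesis at precisely the step where one must not care whether a vertex plays the role of the centre or of a leaf of the star. Everything else --- the elementary comparison of $\ll$ on $2P$ with $\ll$ on $P$, and a single application of Theorem~\ref{thm:main1buildings} --- is routine; in particular the ``balanced'' conclusion of that theorem is stronger than what is needed here, but it is available at no cost.
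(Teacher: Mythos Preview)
Your proposal is correct and follows essentially the same route as the paper: translate the condition $\E\cap\E g_1K\cap\cdots\cap\E g_rK\neq\emptyset$ into the existence of vertices $v_0,\dots,v_r\in\E$ with $\vect{d}(v_0,v_j)=\la(g_j)$ via the double-coset/vector-distance dictionary, use the $(-1)$-type hypothesis to make the direction of $\vect{d}$ irrelevant, and then invoke Theorem~\ref{thm:main1buildings}. Your treatment is in fact slightly more detailed than the paper's (the explicit check that $\ll$ on $2P$ matches $\ll$ on $P$, and the explicit choice of $N$ in terms of $K(\E,r,\vect{1})$), but the argument is the same.
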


\begin{proof}
Note that $\E\cap \E g_1K\cap\cdots\cap \E g_rK\neq\emptyset$ if and only if there exit $h_0K,\ldots,h_rK\in \E$ with $h_0K\subseteq h_jKg_jK$ for all $1\leq j\leq r$. Thus $h_j^{-1}h_0\in Kg_jK=K\varpi_{\la(g_j)}K$, and so $\vect{d}(h_jK,h_0K)=\la(g_j)$ for all $1\leq j\leq r$. Since $\Delta(G,K)$ is of $(-1)$-type we have $\vect{d}(h_0K,h_jK)=\la(g_j)$, and so $\E\cap \E g_1K\cap\cdots\cap \E g_rK\neq\emptyset$ if and only if there exist vertices $v_j=h_jK\in \E$ with $\vect{d}(v_0,v_j)=\la(g_j)$ for $1\leq j\leq r$. The result follows from Theorem~\ref{thm:main1buildings}.
\end{proof}

\end{document}